\documentclass{amsart}
\usepackage{amssymb, amsmath}
\usepackage{array}
\usepackage{xcolor}

\usepackage{comment}
\usepackage{enumerate}

\newtheorem{Theorem}{Theorem}
\newtheorem{Definition}[Theorem]{Definition}

\newtheorem{Lemma}[Theorem]{Lemma}
\newtheorem{Corollary}[Theorem]{Corollary}

\newcommand{\sgn}{\mbox{sgn}}

\DeclareMathOperator{\Id}{Id}

\newcommand{\I}{\mbox{Id}}

\def\gp#1{\langle#1\rangle}

\begin{document}
\title[Upper triangular matrices with superinvolution]{upper triangular matrices with superinvolution: \\ identities and images of multilinear polynomials}

\author{Elena Campedel}
\address{Dipartimento di Ingegneria e Scienze dell'Informazione e Matematica, Universit\`a degli Studi dell'Aquila, Via Vetoio 1, 67100, L'Aquila, Italy}
\email{elena.campedel@univaq.it}

\thanks{E. Campedel was supported by GNSAGA-INDAM}

\author{Pedro Fagundes}
\address{Departamento de Matem\'atica, Universidade Federal de S\~ao Carlos, 13565-905, S\~ao Carlos, S\~ao Paulo, Brazil.}
\email{pedrofagundes@ufscar.br}

\thanks{P. Fagundes was supported by FAPESP grants 
2024/19129-8 and 2024/14914-9, and by CNPq grant 405779/2023-2. (Corresponding author)}

\author{Antonio Ioppolo}
\address{Dipartimento di Ingegneria e Scienze dell'Informazione e Matematica, Universit\`a degli Studi dell'Aquila, Via Vetoio 1, 67100, L'Aquila, Italy}
\email{antonio.ioppolo@univaq.it}

\thanks{A. Ioppolo was supported by “Progetti di Ateneo 2024” University of L'Aquila and by GNSAGA-INDAM}

\subjclass[2020]{Primary 15A06, 16R10, 16R50}

\keywords{L’vov-Kaplansky conjecture, Upper triangular matrices, Superinvolutions, Identities, Codimensions}

\begin{abstract}
In this paper we consider the algebra of upper triangular matrices \( UT_n(F) \), endowed with a \(\mathbb{Z}_2\)-grading (superalgebra) and equipped with a superinvolution. These structures naturally arise in the context of Lie and Jordan superalgebras and play a central role in the theory of polynomial identities with involution, as showed in the framework developed by Aljadeff, Giambruno, and Karasik in \cite{AGK}. 

We provide a complete description of the identities of \( UT_4(F) \), where the grading is induced by the sequence \((0,1,0,1)\) and the superinvolution is the super-symplectic one. This work extends previous classifications obtained for the cases \( n = 2 \) and \( n = 3 \), and addresses an open problem for \( n \geq 4 \). 

In the last part of the paper, we investigate the image of multilinear polynomials on the superalgebra \( UT_n(F) \) with superinvolution, showing that the image is a vector space if and only if \( n \leq 3 \), thus contributing to an analogue of the L’vov-Kaplansky conjecture in this  context.
\end{abstract}

\maketitle

\section{\bf Introduction}
Let $F\langle x_1,x_2,\dots \rangle$ be the free associative algebra and let $A$ be an associative $F$-algebra. The image of a polynomial $f\in F\langle X \rangle$ on the algebra $A$ is naturally defined as the image of the polynomial function induced by $f$ on $A$ and we denote it by $f(A)$. The problem to study the algebraic structure of $f(A)$ has gained special attention in the recent years and it is related with important problems from Ring Theory. For instance, in case $f$ is a multilinear polynomial, one can asks if its image on some algebra inherits the linear structure of the later. This is in connection with the so called L'vov-Kaplansky conjecture which states that the image of a multilinear polynomial on the full matrix algebra is a vector space. A satisfactory answer to the L'vov-Kaplansky conjecture is only known in small cases (see \cite{albert,kanel-belov,shoda}). For the case of images of arbitrary polynomials on algebras we have the Waring type problems, which originally appeared in arithmetic but have spread to groups and algebras (see \cite{Matej} and references therein).

The case in which $A=UT_n(F)$, the algebra of upper triangular matrices, has also been investigated. In 2022, Gargate and de Mello (see \cite{gargate}) gave a  positive answer for an analogue of L'vov-Kaplansky conjecture where $F$ is an infinite field. See also \cite{fagundesplamen, fagundes} for the classification of images of multilinear polynomials on $UT_n(F)$ endowed with certain group gradings and (graded-)involutions.

The particular case where the image of a polynomial vanishes on some algebra is where the theory of polynomial identities takes its place. An important branch of research in PI-theory is to determine all polynomial identities satisfied by a given algebra $A$, or more precisely, to find a set of generators of the $T$-ideal of identities of $A$. For the case of $UT_n(F)$ this is already well known for an arbitrary field $F$. 

In the setting of algebras with additional structure, the graded identities of $UT_n(F)$ were described in \cite{DiVincenzoPlamenAngela} whereas the involution case was addressed in \cite{DiVincenzoKoshlukovLaScala2006} (just in the small cases $n=2$ and $n=3$). 

In this paper we deal with superalgebras (algebras graded by $\mathbb{Z}_2$) with superinvolution. They play a prominent role in the setting of Lie and Jordan superalgebras: in fact the skew-symmetric elements form a Lie superalgebra with respect to the super-commutator product, while the symmetric elements form a Jordan superalgebra with respect to the super-symmetrized product. More details can be found for instance in \cite{Kac1977, RacineZelmanov2003}. 

Within the theory of polynomial identities, the significance of superalgebras with superinvolution was highlighted in $2017$ by Aljadeff, Giambruno and Karasik. In \cite{AGK} they proved that any algebra with involution has the same identities as the Grassmann envelope of a finite-dimensional superalgebra with superinvolution. This result establishes a relationship similar to that demonstrated by Kemer between algebras and superalgebras, and it has led to a thorough study of this kind of algebras and their corresponding identities (see, for instance \cite{CentroneEstradaIoppolo2022, CostaIoppoloSantosVieira2021, Ioppolo2022, IoppoloMartino2021} and the references therein).

In this paper we focus our attention on the superalgebra $UT_n(F)$ of $n \times n $ upper triangular matrices endowed with a superinvolution, where $F$ is an algebraically closed field of characteristic zero. The superinvolutions on such a superalgebra were described in \cite{IoppoloMartino2018}. In the same paper, the authors gave a finite set of generators of the $*$-identities of $UT_2(F)$ and $UT_3(F)$ endowed with some superinvolution. However the description of the $*$-identities of $UT_n(F)$ is an open problem for $n\geq4$.

In this paper we shall give a complete description of the $*$-identities of $UT_4(F)$ endowed with the $\mathbb{Z}_2$-grading induced by the sequence $(0,1,0,1)$ and with the super-symplectic superinvolution. In the last section we also show that the images of multilinear $*$-polynomials which induce multilinear functions on $UT_n(F)$ is a vector space if and only if $n\leq 3$.

\section{\bf Superalgebras with superinvolution} \label{preliminaries}

Throughout the rest of the paper, unless otherwise stated, $F$ is a field of characteristic zero and all the algebras are assumed to be associative and to have the same ground field $F$. 
In this section we report  basic notions about \emph{superalgebras with superinvolution}.

An algebra $A$ is a  \emph{$\mathbb{Z}_2$-graded algebra} or  a \emph{superalgebra} if it has a vector space decomposition $A=A_0\oplus A_1$ such that $A_0 A_0 + A_1 A_1 \subseteq A_0$ and $A_0 A_1 + A_1 A_0 \subseteq A_1$. 
The elements of $A_{0}$ are called \emph{homogeneous of degree $0$} and those of $A_{1}$  \emph{homogeneous of degree $1$}. An element $a$ of $A$ is \emph{homogeneous} if it is homogeneous of degree $0$ or $1$ (and we denote its degree by $\vert a \vert_A$ or simply $\vert a \vert$ in absence of ambiguity). Analogously, a subalgebra or an ideal $I \subseteq A$ is \emph{homogeneous} or \emph{graded} if $I=(I\cap A_{0})\oplus  (I \cap A_{1})$.

A linear map $\ast \colon A \rightarrow A$ is said to be a \emph{superinvolution} of $A$ if 
\begin{itemize}
    \item[-] $*$ preserves the grading of the superalgebra, i.e., $A_i^* \subseteq A_i$, $i \in \{0, 1 \} $,

    \item[-] $*$ has order at most $2$,

    \item[-] for any homogeneous elements $a, b \in A$,
$$
(ab)^*=(-1)^{\vert a \vert \vert b \vert} b^* a^*.
$$ 
\end{itemize}

If $A_1^2=\{0\}$, then a superinvolution on $A$ is actually a graded involution. 

Here we are making use of the exponential notation: $a^* := *(a)$. 
In what follows, we shall freely use such a notation when it results more convenient.  

We refer to superalgebras with superinvolution simply as \emph{$\ast$-algebras}.

One could extend the notions of subalgebra and ideal also to this setting. 
In particular, recall that an ideal $I$ (subalgebra) of $A$ is a \emph{$\ast$-ideal} (\emph{subalgebra}) of $A$ if it is a graded ideal (subalgebra, respectively) such that $I^*=I$. The Jacobson radical $J = J(A)$ of a $\ast$-algebra $A$ is an example of a $\ast$-ideal. 

Last but not least,  recall that two $\ast$-algebras $(A, \ast)$ and $(A', \sharp)$ are \emph{isomorphic} (as $\ast$-algebras) if there exists an isomorphism of superalgebras  $\tau \colon A \rightarrow A'$ such that $\tau \left( a^* \right)=  \tau(a)^\sharp  $, for any $a\in A.$ 

\medskip

The rest of this section is devoted to recall the basic notions of the theory of polynomial identities in the setting of $\ast$-algebras.

Let  $F\gp{Y\cup Z}$ be the free algebra on the disjoint countable sets of variables $Y:=\{y_1, y_2, \ldots \}$ and $Z:=\{z_1, z_2, \ldots \}$. It has a natural superalgebra structure if we require that the variables from $Y$ have degree $0$ and those from $Z$ have degree $1$ (and then extend this grading to the monomials on $Y \cup Z$). This algebra  is said to be the \emph{free superalgebra} over $F$. 

We denote by $\Id^{sup}(A) = \{ f \in  F\gp{Y\cup Z}  \mid f \equiv 0 \mbox{ on } A \}$
the set of superpolynomial identities of $A$, which is a $T_2$-ideal of the free superalgebra, i.e., an ideal invariant under all graded endomorphisms of $F\gp{Y\cup Z}$.

Now, let $F\langle Y\cup Z, \ast \rangle$ be the free superalgebra with superinvolution (simply called the \emph{free $\ast$-algebra}) over $F$, where for each variable $y \in Y$, $y^*$ is homogeneous of  degree $0$, while for each variable $z\in Z$, $z^*$ is homogeneous of degree $1$.

Since $\mbox{char}\,F=0,$ given a $\ast$-algebra $A$, we can write 
\begin{equation} \label{symmetric and skew decomposition}
A=A_0^+ \oplus A_0^-\oplus A_1^+\oplus A_1^-,    
\end{equation} 
where, for any $i \in \{ 0, 1 \}$, $A_i^+=\{a\in A_i \ |
\ a^*=a\}$ and $A_i^-=\{a\in A_i \ | \ a^*=-a\}$ denote the sets of
symmetric and skew elements of $A_i$, respectively. This clarifies the reason why it is sometimes convenient to regard $F\gp{Y\cup Z, \ast}$ as generated by even symmetric and even skew variables, and by odd  symmetric and odd skew variables, namely
$$F\gp{Y\cup Z, \ast}=F\gp{y_1^+, y_1^-,z_1^+, z_1^-, y_2^+, y_2^-,z_2^+, z_2^-, \ldots },$$
where $y_i^+=y_i+y_i^\ast$, $y_i^-=y_i-y_i^\ast$, $z_i^+=z_i+z_i^\ast$ and $z_i^-=z_i-z_i^\ast$, $i\geq 1$.
We shall refer to its elements simply as \emph{polynomials}.

Now let $A$ be a $\ast$-algebra, as in (\ref{symmetric and skew decomposition}). A polynomial 
$$
f(y^+_1,\ldots,y^+_m, y^-_1,\ldots,y^-_n, z^+_1,\ldots,z^+_r, z^-_1,\ldots,z^-_s)\in F\langle Y\cup Z, \ast \rangle
$$
is a \emph{$\ast$-polynomial identity of $A$} (or simply a \emph{$\ast$-identity}) if, for all $u^+_1, \ldots, u^+_m\in A_0^+,$  $u^-_1, \ldots, u^-_n\in A^-_0,$ $v^+_1, \ldots, v^+_r\in A^+_1$ and $v^-_1, \ldots, v^-_s\in A^-_1$, one has that
$$
f \left(u^+_1, \ldots, u^+_m, u^-_1, \ldots, u^-_n, v^+_1, \ldots, v^+_r, v^-_1, \ldots, v^-_s \right)=0.
$$ 

We use the symbol $\Id^\ast(A)$ to indicate the set of all the $\ast$-identities satisfied by $A$,  which is a $T_2^\ast$-ideal of the free $\ast$-algebra, namely a two-sided ideal of the free $\ast$-algebra stable under every graded endomorphism of  $F\langle Y \cup Z \rangle$ commuting with the superinvolution $\ast$.

Since $F$ has characteristic zero, as in the ordinary case, it is easy to see that  $\Id^\ast(A)$ is completely determined by the multilinear polynomials it contains. We denote by
$$
P^{\ast}_n:=\mbox{span}_F\{w_{\sigma(1)} \cdots w_{\sigma(n)}\mid \sigma
\in S_n, \ w_i \in \{ y^+_i, y^-_i, z^+_i, z^-_i \},   \ i=1, \ldots, n\}
$$
the space of multilinear polynomials of degree $n$ in the variables
$y^+_1$, $y^-_1$, $z^+_1$, $z^-_1$, $\ldots$, $y^+_n$, $y^-_n$, $z^+_n$, $z^-_n$.

The study of $\Id^{\ast}(A)$ is equivalent to that of $P^{\ast}_n\cap \Id^{\ast}(A)$, for all $n\geq 1$. As made by Regev in the ordinary case (\cite{R}), one can define the $n$-th \emph{$\ast$-codimension} of the $\ast$-algebra $A$ as
$$
c^{\ast}_n(A):=\dim_F\frac{P^{\ast}_n}{P^{\ast}_n\cap \Id^{\ast}(A)},\ n\geq 1.
$$

Let $n\geq 1$ and write $n=n_1+\cdots+n_4$ as a sum of four non-negative integers. We denote by
$P_{n_1, \ldots, n_4}\subseteq P^{*}_n$
the vector space of the multilinear polynomials in which the first $n_1$ variables are even symmetric, the next $n_2$ variables are odd symmetric, the  next $n_3$ variables are even skew and the last $n_4$ variables are odd skew. Set 
$$c_{n_1, \ldots, n_4}(A)=\dim_F\dfrac{P_{n_1, \ldots, n_4}}{P_{n_1, \ldots, n_4}\cap \I^{*}(A)}.
$$
Hence it is immediate to see that
\begin{equation} \label{cod}
c_n^{*}(A)=\sum_{n_1+\cdots+n_4=n} \binom{n}{n_1, \ldots, n_4}c_{n_1, \ldots, n_4}(A),
\end{equation}
where $\binom{n}{n_1, \ldots, n_4}=\frac{n!}{n_1!\cdots n_4!}$ stands for the  multinomial coefficient.

If $A$ satisfies an ordinary polynomial identity, the sequence  $\{c_n^{\ast}(A)\}_{n\geq 1}$ is exponentially bounded (\cite{GiambrunoIoppoloLaMattina2016}). Moreover, in case $A$ is a finite-dimensional $*$-algebra, the analogue of the result of Giambruno and Zaicev holds (see \cite{Ioppolo2018}), namely that 
$$
\exp^\ast(A) := \lim_{n \to +\infty} \sqrt[n]{ c_n^{\ast}(A)}
$$
exists and it is a non-negative integer, called the \emph{$\ast$-exponent} of $A$.

\section{\bf Upper triangular matrices} \label{upper triangular}

In this section we focus our attention on the algebra $UT_n(F)$ of $n \times n$ upper triangular matrices over the field $F$, namely
\[
UT_n(F) = \left \{ \begin{pmatrix}
a_{11} & a_{12} & \cdots & a_{1n} \\
0 & a_{22} & \cdots & a_{2n} \\
\vdots &  & \ddots & \vdots \\
0 & \cdots & \cdots & a_{nn}\\
\end{pmatrix} \mid a_{11}, a_{12}, \ldots, a_{nn} \in F \right \}.
\]

Let $e_{kj}$ denote the usual matrix units, i.e., a matrix full of zeros except in the position $(k,j)$ in which a 1 appears. An $n$-tuple $(g_1, \ldots, g_{n})\in \mathbb{Z}_2^{n}$ defines an elementary $\mathbb{Z}_2$-grading on $UT_{n}(F)$ by setting
\begin{equation*}
\begin{split}
\left(UT_n(F) \right)_0 &= \mbox{span} \left \{ e_{kj} \mid g_k + g_j \equiv 0 \ (\mbox{mod} \ 2) \right \}, \\
\left(UT_n(F) \right)_1 &= \mbox{span} \left \{ e_{kj} \mid g_k + g_j \equiv 1 \ (\mbox{mod} \ 2) \right \}.    
\end{split}
\end{equation*}

The importance of such gradings is highlighted in the following result (\cite{DiVincenzoKoshlukovValenti2004, ValentiZaicev2003, ValentiZaicev2007}).

\begin{Theorem}   \label{the G gradings on UTn are elementary}
Any $\mathbb{Z}_2$-grading on $UT_n(F)$ is equivalent to an elementary one. Moreover, there exist $2^{n-1}$ non-isomorphic $\mathbb{Z}_2$-gradings on $UT_n(F)$.
\end{Theorem}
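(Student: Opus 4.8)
The plan is to handle the two assertions separately, first reducing an arbitrary $\mathbb{Z}_2$-grading to an elementary one, and then counting the resulting isomorphism classes. For the first assertion I would start from the Wedderburn decomposition $UT_n(F) = D \oplus J$, where $D = \mathrm{span}\{e_{11}, \dots, e_{nn}\}$ is the diagonal and $J = J(UT_n(F))$ is the Jacobson radical (the strictly upper triangular matrices). Given any $\mathbb{Z}_2$-grading, the radical $J$ is automatically a graded ideal, since the radical of a finite-dimensional group-graded algebra over a field of characteristic zero is homogeneous. Hence the grading descends to the semisimple quotient $UT_n(F)/J \cong F^n$, and by the graded Wedderburn--Malcev (Taft) theorem I may choose a \emph{graded} semisimple complement $\bar D \subseteq UT_n(F)$, so that $UT_n(F) = \bar D \oplus J$ with $\bar D \cong F^n$ as graded algebras.

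The crucial point is that the $n$ orthogonal primitive idempotents $f_1, \dots, f_n$ of $\bar D$ may be taken homogeneous of degree $0$. Indeed, a homogeneous idempotent in a $\mathbb{Z}_2$-graded algebra must have degree $0$, since $e = e^2$ forces $|e| = 2|e| = 0$ in $\mathbb{Z}_2$. To see that the $f_i$ themselves are homogeneous, consider the order-at-most-$2$ automorphism $\psi$ defined by $\psi(a_0 + a_1) = a_0 - a_1$, which encodes the grading; it preserves the graded subalgebra $\bar D$ and the graded filtration $J \supseteq J^2 \supseteq \cdots$. Its restriction to $\bar D \cong F^n$ is an algebra automorphism, hence a permutation of the $f_i$, but this permutation must preserve the directed Peirce structure ($f_i\, UT_n(F)\, f_j \ne 0$ only for $i \le j$) and therefore the orientation of the chain, so it is the identity. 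Thus each $f_i$ is fixed by $\psi$, hence homogeneous of degree $0$, and every Peirce component $f_i\, UT_n(F)\, f_j$ is a graded subspace of dimension at most one. Consequently the matrix units $\tilde e_{ij} := f_i x f_j$ are homogeneous; setting $g_1 = 0$ and $g_i = g_{i-1} + |\tilde e_{i-1,i}|$ exhibits the grading as elementary in the basis $\{\tilde e_{ij}\}$, and conjugating by the inner automorphism carrying $\{f_i\}$ to $\{e_{ii}\}$ yields the equivalence with a standard elementary grading.

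For the second assertion I would parametrise elementary gradings by tuples $(g_1, \dots, g_n) \in \mathbb{Z}_2^n$ through $|e_{ij}| = g_i + g_j$, giving $2^n$ a priori choices. Since $2c = 0$ in $\mathbb{Z}_2$, replacing $(g_1,\dots,g_n)$ by $(g_1 + c, \dots, g_n + c)$ leaves every degree unchanged, so the global shift action of $\mathbb{Z}_2$ collapses the $2^n$ tuples to $2^{n-1}$ genuinely distinct gradings; equivalently, a grading is determined by the superdiagonal vector $(d_1, \dots, d_{n-1})$, $d_i := g_i + g_{i+1}$, ranging freely over $\mathbb{Z}_2^{n-1}$, because $g_i + g_j = \sum_{k=i}^{j-1} d_k$. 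It remains to show that distinct superdiagonal vectors give non-isomorphic graded algebras. For this I would use that every algebra automorphism of $UT_n(F)$ is inner, given by conjugation $x \mapsto g x g^{-1}$ with $g = d u$, $d$ invertible diagonal and $u$ unitriangular. Diagonal conjugation only rescales each $e_{ij}$ and hence preserves all degrees, while unitriangular conjugation sends $e_{i,i+1}$ to $e_{i,i+1}$ plus terms of strictly larger length in the filtration by powers of $J$. Thus a graded isomorphism $\phi$ between the gradings with data $d$ and $d'$ sends $e_{i,i+1}$, homogeneous of degree $d_i$ in the source, to a homogeneous element of degree $d_i$ in the target whose image in the graded quotient $J/J^2$ is a nonzero scalar multiple of $e_{i,i+1}$, of degree $d_i'$ there; hence $d_i = d_i'$ for every $i$. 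So the superdiagonal vector is a complete isomorphism invariant and there are exactly $2^{n-1}$ non-isomorphic $\mathbb{Z}_2$-gradings.

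The main obstacle, and the technical heart of the argument, is the homogenisation of the idempotent system in the first part: it requires the graded Wedderburn--Malcev theorem together with the rigidity of the Peirce/flag structure of $UT_n(F)$ to exclude a nontrivial permutation of the $f_i$. It is precisely this rigidity --- packaged as the fact that $\mathrm{Aut}(UT_n(F))$ consists of inner automorphisms preserving the filtration by powers of $J$ --- that I would also invoke to certify the non-isomorphism claim in the counting step. Once homogeneity of the diagonal idempotents is secured, both the elementarity and the count $2^{n-1}$ follow by essentially formal bookkeeping.
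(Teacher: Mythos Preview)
The paper does not supply its own proof of this theorem; it is quoted from the literature (Di~Vincenzo--Koshlukov--Valenti and Valenti--Zaicev). Your argument is correct and is essentially the route taken in those references: homogeneity of the Jacobson radical, a graded Wedderburn--Malcev complement $\bar D\cong F^{n}$, the rigidity of the directed Peirce (quiver) structure of $UT_n(F)$ forcing the parity automorphism to fix every primitive idempotent of $\bar D$, and then the count $2^{n-1}$ via the innerness of $\mathrm{Aut}_F(UT_n(F))$ applied to the filtration $J\supseteq J^{2}\supseteq\cdots$. One cosmetic point: the definition $\tilde e_{ij}:=f_i\,x\,f_j$ is ambiguous; you mean a chosen nonzero element of the one-dimensional graded Peirce component $f_i\,UT_n(F)\,f_j$, and you should note that these can be normalised so that $\tilde e_{ij}\tilde e_{jk}=\tilde e_{ik}$, which is what makes the telescoping $|\tilde e_{ij}|=g_i+g_j$ go through.
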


Our next goal is to present the classification of the superinvolutions that can be defined on the algebra $UT_n(F)$. 
We start defining two involutions on $UT_n(F)$.

\begin{enumerate}
    \item[1.] The reflection involution $\circ \colon UT_n(F) \rightarrow UT_n(F)$ is given by the formula
    \[
    e_{ij}^\circ=e_{n+1-j,n+1-i}
    \]
    where $1\leq i\leq j\leq n$.
    \vspace{0.1 cm}

    \item[2.] The symplectic involution $s \colon UT_n(F)\rightarrow UT_n(F)$ can be defined just in case $n=2m$ by $Y^s=JY^\circ J$, where
    \[
    J=\begin{pmatrix}
        I_m&0\\
        0&-I_m
    \end{pmatrix}
    \] and $I_m$ is the identity matrix of order $m$.
\end{enumerate}

If the $\mathbb{Z}_2$-grading on $UT_n(F)$ is induced by a sequence $(g_1,\dots,g_n)$ such that 
\[
g_1+g_n=g_2+g_{n-1}=\cdots=g_n+g_1
\]
it follows that $\circ$ and $s$ are actually graded involutions on $UT_n(F)$.

\medskip

Let us now recall the definition of a specific superautomorphism that applies to any superalgebra with nilpotent odd part, following the general framework introduced in \cite{Ioppolo2022}. For the particular case of the algebra $UT_n(F)$ an older definition can be found in \cite[Definition 3.2]{IoppoloMartino2018}.

\begin{Definition}
The superautomorphism $\phi \colon UT_n(F) \rightarrow UT_n(F)$ is the map given on homogeneous elements $a_0 \in A_0$ and $a_1 \in A_1$ by putting
\begin{equation*}
\begin{split}
\phi(a_0) &= \begin{cases} a_0 & \mbox{ if } a_0 \in \left(UT_n(F)_1\right)^{2k} \mbox{ but } a_0 \not \in \left(UT_n(F)_1\right)^{2k+2} \mbox{ and } k \mbox{ is even} \\
-a_0 & \mbox{ otherwise } \end{cases}, \\
\phi(a_1) &= \begin{cases} a_1 & \mbox{ if } a_1 \in \left(UT_n(F)_1\right)^{2k+1} \mbox{ but } a_1 \not \in \left(UT_n(F)_1\right)^{2k+3} \mbox{ and } k \mbox{ is even} \\
-a_1 & \mbox{ otherwise } \end{cases}.
\end{split}
\end{equation*}
\end{Definition}

Composing the two involutions above with the superautomorphism $\phi$ we get the following two superinvolutions on $UT_n(F)$.

\begin{enumerate}
    \item[1.] The super-reflection superinvolution is the map $\bar{\circ} = \circ\phi$.
    \vspace{0.1 cm}
    
    \item[2.] The super-symplectic superinvolution is the map $\bar{s} = s \phi$.
\end{enumerate}

Now we are in a position to present the classification of the superinvolutions on $UT_n(F)$ (see \cite{IoppoloMartino2018}).

\begin{Theorem} \label{superinvolutions on UTn}
Let $UT_n(F)$ be endowed with an elementary $\mathbb{Z}_2$-grading defined by the $n$-tuple $(g_1,\dots,g_n)$. Then there exists a superinvolution $*\colon UT_n(F)\rightarrow UT_n(F)$ if and only if 
    \[
    g_1+g_n=g_2+g_{n-1}=\cdots =g_n+g_1.
    \]
Moreover, every superinvolution on $UT_n(F)$ with the elementary grading above is equivalent to either $\bar{\circ}$ or $\bar{s}$.
\end{Theorem}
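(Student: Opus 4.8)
The plan is to reduce the classification of superinvolutions to the already available classification of graded involutions on $UT_n(F)$, using the superautomorphism $\phi$ as a bridge. Write $A = UT_n(F)$ with the fixed elementary grading. The cornerstone is a lemma on $\phi$: directly from its definition in terms of the filtration by the powers of the odd component $A_1$, one checks that $\phi$ is a grading-preserving linear bijection with $\phi^2 = \mathrm{id}$ satisfying
$$\phi(ab) = (-1)^{\vert a\vert\vert b\vert}\phi(a)\phi(b)$$
for all homogeneous $a, b$ (the sign appears only when both factors are odd, and the depth computation $\left(A_1\right)^{2k+1}\left(A_1\right)^{2l+1}\subseteq \left(A_1\right)^{2(k+l+1)}$ produces exactly the required parity). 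Moreover, any grading-preserving algebra (anti-)automorphism of $A$ preserves each power $\left(A_1\right)^{k}$, hence preserves the depth of every homogeneous element and therefore commutes with $\phi$.

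Granting this, I would establish a bijection between superinvolutions and graded involutions on $A$ by sending $\ast \mapsto \star := \ast\,\phi$ (and back via $\ast = \star\,\phi$, since $\phi^2 = \mathrm{id}$). If $\ast$ is a superinvolution then $\ast$ preserves $A_1$ and reverses products, so it preserves the filtration $\left(A_1\right)^{k}$ and commutes with $\phi$; a short computation using the displayed sign property and $\vert\phi(a)\vert = \vert a\vert$ then shows $\star(ab) = \star(b)\star(a)$ with no sign, that $\star$ preserves the grading, and that $\star^2 = \mathrm{id}$, i.e. $\star$ is a graded involution. Conversely $\star\,\phi$ is a superinvolution. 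This correspondence sends $\bar{\circ}$ to $\circ$ and $\bar{s}$ to $s$, and it respects $\ast$-isomorphism: an equivalence $\tau$ is a graded automorphism commuting with the (super)involution, and by the lemma $\tau$ also commutes with $\phi$, so it conjugates $\ast$ and $\star$ simultaneously.

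For necessity of the symmetry condition, I would argue that a superinvolution, being up to signs a support-reversing anti-automorphism of $A$, must send $e_{1k}$ into the position $(n+1-k,\,n)$; grading preservation then forces $g_1 + g_k \equiv g_{n+1-k} + g_n \pmod 2$ for every $k$, which is precisely $g_k + g_{n+1-k} \equiv g_1 + g_n$, i.e. the asserted equalities. Equivalently, this follows from the correspondence together with the fact, recalled before the statement, that the existence of a graded involution needs the same condition in order for $\circ$ or $s$ to be graded. For sufficiency and the final dichotomy, the condition guarantees that $\circ$ and $s$ are graded involutions, so $\bar{\circ} = \circ\phi$ and $\bar{s} = s\phi$ are genuine superinvolutions; and since every graded involution on $UT_n(F)$ compatible with this grading is equivalent to $\circ$ or (for $n$ even) $s$ by the known classification, transporting back through $\star \mapsto \star\phi$ shows every superinvolution is equivalent to $\bar{\circ}$ or $\bar{s}$.

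The main obstacle is the lemma on $\phi$: one must verify the sign identity and the commutation with graded (anti-)automorphisms case-by-case on homogeneous elements via the depth filtration, and confirm that the passage $\ast \mapsto \ast\phi$ is well defined and involutive (in particular that $\star^2 = \mathrm{id}$, which relies on $\ast$ and $\phi$ commuting). Once this bridge is in place, the two remaining ingredients — the support-reversing shape of anti-automorphisms of $UT_n(F)$ and the classification of its graded involutions — are standard and reduce the theorem to bookkeeping with the grading sequence.
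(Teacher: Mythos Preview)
The paper does not supply its own proof of this statement: the theorem is quoted from \cite{IoppoloMartino2018}, so there is no in-paper argument to compare against.

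That said, your strategy is sound and is exactly the one suggested by the definitions $\bar{\circ}=\circ\phi$ and $\bar{s}=s\phi$ themselves. The lemma you isolate on $\phi$ --- that it squares to the identity, satisfies $\phi(ab)=(-1)^{\vert a\vert\vert b\vert}\phi(a)\phi(b)$, and commutes with every grading-preserving (anti\nobreakdash-)automorphism because such maps preserve the filtration $\left(A_1\right)^k$ --- is correct, with the caveat that the definition of $\phi$ as written must be read on matrix units (which have a well-defined filtration depth) and then extended linearly, not literally on arbitrary homogeneous elements. Your bijection $\ast\mapsto\ast\phi$ then does carry superinvolutions to graded involutions and back, and it transports equivalences because graded automorphisms commute with $\phi$ for the reason you give.

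The one substantive external input you invoke is the classification of \emph{graded} involutions on $UT_n(F)$ for an elementary $\mathbb{Z}_2$-grading: existence only under the symmetry condition on $(g_1,\dots,g_n)$, and then equivalence to $\circ$ or $s$. That result is itself established in \cite{IoppoloMartino2018} (it is not an immediate corollary of the ungraded classification in \cite{DiVincenzoKoshlukovLaScala2006}), so your argument is a correct reduction rather than an independent proof. Your alternative direct necessity argument --- reading off the grading condition from where $e_{1k}$ must land --- needs the structure theorem for anti-automorphisms of $UT_n(F)$ (every one is $\circ$ composed with an inner automorphism) to be made rigorous; this is standard but should be cited rather than asserted.
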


For simplicity, we will refer to the elementary gradings induced by the above kind of sequences as $*$-type.

\section{\bf On the identities of upper triangular matrices} \label{identities upper triangular}

An important problem in PI-theory is determining a basis of the polynomial identities for a given algebra. In the case of upper triangular matrices, a general result was given by Malcev in \cite{Malcev1971} (ordinary identities, no additional structure). In the setting of superalgebras with superinvolution, analogous results have been achieved when the matrix size is relatively small. In \cite{IoppoloMartino2018}, the authors fully addressed the cases $n=2$ and $n=3$, for any $\mathbb{Z}_2$-grading and any superinvolution. It is important to note, however, that when dealing with the trivial grading, the results presented in the aforementioned paper are merely a reformulation of the theorems from \cite{DiVincenzoKoshlukovLaScala2006}, which was carried out for algebras with involution.

\medskip

Since we will also use them in the last section of the paper, here we report the results concerning the algebra $UT_3(F)$ with the elementary grading given by $(0,1,0)$ and the only superinvolution that can be defined in this case, namely the super-reflection $\bar{\circ}$ given by:
\[
\begin{pmatrix}
    a&b&c\\
    0&d&e\\
    0&0&f
\end{pmatrix}^{\bar{\circ}}=\begin{pmatrix}
    f&e&-c\\
    0&d&b\\
    0&0&a
\end{pmatrix}.
\]

\begin{Theorem}\label{identities of ut3}
The $T_2^*$-ideal of identities of the $*$-algebra $UT_3(F)$ with elementary $\mathbb{Z}_2$-grading defined by the triple $(0,1,0)$ and superinvolution $\bar{\circ}$ is generated, as a $T_2^*$-ideal, by the following polynomials:
\begin{equation*}
\begin{split}
&1. \ [y_1^+, y_2^+], \ \ \ \ 2. \ [y^+, y^-], \ \ \ \ 3. \ y_1^- y_2^- y_3^- - y_3^- y_2^- y_1^-, \ \ \ \ 4. \ [y_1^-, y_2^-][y_3^-, y_4^-],  \\
&5. \ z[y_1^-, y_2^-], \ \ \ 6. \ [y_1^-, y_2^-]z, \ \ \ 7. \ y_1^- z y_2^-, \ \ \ 8. \ z_1 y^- z_2, \ \ \ 9. \ z_1 y^+ z_2 - z_2 y^+ z_1, \\
&10. \ [z_1^+, z_2^+], \ \ \ 11. \ [z_1^-, z_2^-], \ \ \ 12. \ z^+ z^- + z^- z^+, \ \ \ \ \ \ \ \ \ 13. \  z_1 z_2 z_3.
\end{split}
\end{equation*}
\end{Theorem}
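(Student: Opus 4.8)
The plan is to verify that each of the thirteen listed polynomials is a $*$-identity of $UT_3(F)$ with this specific grading and superinvolution, and then to show that these generate the whole $T_2^*$-ideal by a codimension-counting argument. Let me think about how to set this up concretely.

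The plan is to split the argument into the usual two halves: first check that each of the thirteen polynomials lies in $\Id^{\ast}(UT_3(F))$, and then show that they generate the whole $T_2^{\ast}$-ideal by a codimension count. For the first half I would record the action of $\bar{\circ}$ on matrix units and read off the homogeneous bases
\[
A_0^+=\spa\{e_{11}+e_{33},\,e_{22}\},\quad A_0^-=\spa\{e_{11}-e_{33},\,e_{13}\},\quad A_1^+=\spa\{e_{12}+e_{23}\},\quad A_1^-=\spa\{e_{12}-e_{23}\},
\]
and then verify the relations by direct substitution. The structurally decisive facts are that the radical $J=\spa\{e_{12},e_{13},e_{23}\}$ satisfies $J^3=0$ with $J^2=\spa\{e_{13}\}$ (this yields 13, and also 4, 5, 6 since every commutator of even skew elements is a scalar multiple of $e_{13}$ and $e_{13}$ is annihilated on both sides by $J$), that $A_0^-A_1A_0^-=0$ and $A_1A_0^-A_1=0$ (yielding 7 and 8), and that the odd homogeneous components are one dimensional (yielding 10 and 11, while 12 follows from a direct computation on the odd part); the commutativity relations 1 and 2 hold because $A_0^+$ is central in the even part $A_0$.

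For the second half, let $I$ be the $T_2^{\ast}$-ideal generated by the thirteen polynomials, so the first half gives $I\subseteq \Id^{\ast}(UT_3(F))$. Since $\mathrm{char}\,F=0$, it is enough to prove equality on each multilinear component, and after refining by multidegree it suffices to show $P_{n_1,\ldots,n_4}\cap I=P_{n_1,\ldots,n_4}\cap \Id^{\ast}(UT_3(F))$ for every $n=n_1+\cdots+n_4$. The inclusion already yields $c_{n_1,\ldots,n_4}(UT_3(F))\le \dim_F P_{n_1,\ldots,n_4}/(P_{n_1,\ldots,n_4}\cap I)$, so the whole problem reduces to producing, for each multidegree, an integer $U$ that is simultaneously an upper bound for $\dim_F P_{n_1,\ldots,n_4}/(P_{n_1,\ldots,n_4}\cap I)$ and a lower bound for $c_{n_1,\ldots,n_4}(UT_3(F))$: the chain $U\le c_{n_1,\ldots,n_4}\le \dim_F P_{n_1,\ldots,n_4}/(P_{n_1,\ldots,n_4}\cap I)\le U$ then collapses and forces $P_{n_1,\ldots,n_4}\cap I=P_{n_1,\ldots,n_4}\cap \Id^{\ast}(UT_3(F))$.

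The upper bound comes from a normal-form reduction modulo $I$. By 13 any monomial containing three odd variables is zero, so one may assume at most two variables are odd and organize the count by this number, $0$, $1$, or $2$. Relations 1 and 2 let me collect the even symmetric variables in increasing index order and treat them as scalars commuting through the even block; relations 3 and 4 reduce any product of even skew variables to an ordered product carrying at most a single commutator; relations 5, 6, 7 annihilate the configurations in which an even skew bracket, or a lone even skew variable, is pressed against the odd part; and relations 8, 9, 10, 11, 12 order and sign-normalize the at most two surviving odd variables and fix their position relative to the even block. Carrying this case analysis through yields an explicit finite list of canonical monomials spanning $P_{n_1,\ldots,n_4}$ modulo $I$, and I would take $U$ to be its cardinality. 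For the matching lower bound I would evaluate exactly these canonical monomials on generic arguments $\lambda(e_{11}+e_{33})+\mu e_{22}$, $\gamma(e_{11}-e_{33})+\delta e_{13}$, and scalar multiples of $e_{12}\pm e_{23}$, then read the coefficients of $e_{11},e_{22},e_{33},e_{12},e_{13},e_{23}$ as polynomial functions of the parameters; distinct canonical monomials should give linearly independent coefficient functions, proving $c_{n_1,\ldots,n_4}\ge U$.

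The main obstacle is the bookkeeping in the reduction together with the exact matching of the two counts: I expect the delicate point to be verifying that relations 5 through 9 really do collapse every mixed even–odd monomial onto the short canonical list without introducing linear dependencies among the listed monomials, and then confirming that the generic evaluation returns precisely $U$ independent functions rather than fewer. Once the two counts are shown to coincide for every multidegree, the pinching inequality gives $I=\Id^{\ast}(UT_3(F))$.
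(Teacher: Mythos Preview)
The paper does not actually prove this theorem: it is quoted from \cite{IoppoloMartino2018} as a known result and carries no proof in the present paper. So there is no ``paper's own proof'' to compare against here; the relevant benchmark is the method the paper uses for the analogous $UT_4$ result (Theorem~\ref{ideal of ut4}), and your plan is exactly that method: verify the listed polynomials vanish, let $I$ be the $T_2^{\ast}$-ideal they generate, reduce each $P_{n_1,\ldots,n_4}$ modulo $I$ to an explicit list of canonical monomials, and then show by explicit matrix evaluations that those monomials are linearly independent modulo $\Id^{\ast}(UT_3(F))$, pinching the two dimensions together.

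That said, what you have written is a plan, not a proof. You explicitly defer the two places where all the content lives: (i) the exact list of canonical monomials in each of the cases $n_3+n_4=0,1,2$ (for instance, in the purely even case you should end up with $y_1^+\cdots y_l^+y_{l+1}^-\cdots y_m^-$ and $y_1^+\cdots y_l^+[y_i^-,y_{l+1}^-,\ldots,\widehat{y_i^-},\ldots,y_m^-]$ for $i>l+1$, which the paper in fact uses later), and (ii) the concrete evaluations that separate them. Until those are written out and checked, the ``$U\le c_{n_1,\ldots,n_4}\le\cdots\le U$'' sandwich is only asserted. There is no wrong idea in your outline, and the structural observations you list about $A_0^\pm$, $A_1^\pm$, $J$, and $J^2$ are correct and are exactly what drives the reduction; but to turn this into a proof you must actually produce the finite spanning sets and the separating evaluations rather than say that you would.
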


The main goal of this section is to deal with the algebra $UT_4(F)$ of $4 \times 4$ upper triangular matrices. Apart from the trivial grading, on this algebra we can define the following elementary gradings of $*$-type: $(0,0,1,1)$, $(0,1,1,0)$ and $(0,1,0,1)$. According to Theorem \ref{superinvolutions on UTn}, on all the above superalgebras we can define two, non-equivalent, superinvolutions. 

\medskip

Let us consider the superalgebra $A = UT_4(F)$ with elementary grading induced by $(0,1,0,1)$ and endowed with the super-symplectic superinvolution
\[
\begin{pmatrix}
a & b & c & d \\
0 & e & f & g \\
0 & 0 & h & i \\
0 & 0 & 0 & l
\end{pmatrix}^{\bar{s}} = 
\begin{pmatrix}
l & i & g & d \\
0 & h & -f & c \\
0 & 0 & e & b \\
0 & 0 & 0 & a
\end{pmatrix}.
\]

Clearly, the symmetric and skew homogeneous subspaces of $A$ are the following:
\begin{equation*}
\begin{split}
A_0^{+} &= \mbox{span}_F \{e_{11}+e_{44}, e_{22}+e_{33}, e_{13}+e_{24} \}; \\
A_0^{-} &= \mbox{span}_F \{e_{11}-e_{44}, e_{22}-e_{33}, e_{13}-e_{24} \}; \\
A_1^{+} &= \mbox{span}_F \{e_{14}, e_{12}+e_{34} \}; \\
A_1^{-} &= \mbox{span}_F \{e_{23}, e_{12}-e_{34} \}.
\end{split}
\end{equation*}

In the following lemmas we collect some $*$-identities satisfied by the algebra $A$. In order to simplify the notation we denote by $y_i$ any variable of homogeneous degree $0$, by $z_i$ any variable of homogeneous degree $1$ and by $x_i$ any variable. 

\begin{Lemma}\label{identities of ut4}
The following are $*$-identities of $A$:
\begin{enumerate}


\item\label{id:2} $[y_1, y_2]x[y_3,y_4] \equiv 0$;
\vspace{0.1 cm}

\item\label{id:3} $[x_1, x_2, x_3] - [x_3, x_2, x_1]  - [x_1, x_3, x_2]\equiv 0$ \  \mbox{(Jacobi identity)};
\vspace{0.1 cm} 




\item\label{id:7} $[y_1, y_2, y_{i_3}, \ldots, y_{i_k}] - [y_1, y_2, y_{3}, \ldots, y_{k}] \equiv 0$, $k\geq 2$;
\vspace{0.1 cm}

















\item \label{id:24} $z_1^+ y z_2^+ \equiv 0$; 
\vspace{0.1 cm}

\item\label{id:25} $z_1^{+} z_2 z_3^{+} - z_3^{+} z_2 z_1^{+}  \equiv 0$;
\vspace{0.1 cm}

\item\label{id:26} $z_1^{-} z_2 z_3^{-} - z_3^{-} z_2 z_1^{-}  \equiv 0$;
\vspace{0.1 cm}


\item\label{id:28} $z_1 z_2 z_3 z_4 \equiv 0$;
\vspace{0.1 cm}

\item\label{id:32} $z_1^- y_4 z_2^+ y_5 z_3^- \equiv 0$;
\vspace{0.1 cm}

\item\label{id:33} $z_1^- y_2 z_3^- y_4 z_5^+ + z_5^+ y_2 z_3^- y_4 z_1^- \equiv 0$;
\vspace{0.1 cm}

\item \label{id:34} $ [y_1, \ldots, y_a] z y^{+} - y^{+} [y_1,\ldots,y_a] z \equiv 0$;
\vspace{0.1 cm}

\item \label{id:35} $  [y_1, \ldots, y_a] z y^{-} + y^{-} [y_1,\ldots,y_a] z \equiv 0$.




\end{enumerate}
\end{Lemma}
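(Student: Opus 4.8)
The plan is to verify each of the eleven claimed $*$-identities directly by computing products of homogeneous elements drawn from the explicit bases of $A_0^\pm$ and $A_1^\pm$ listed above. Since the identities are multilinear, it suffices to check that each vanishes (or satisfies the stated relation) when every variable is replaced by a basis element of the appropriate symmetric/skew homogeneous subspace. The crucial structural facts I would extract first are the multiplication rules among these basis elements. Writing $u = e_{11}+e_{44}$, $w = e_{22}+e_{33}$, $p = e_{13}+e_{24}$ for the even symmetric generators (and their skew analogues with minus signs), and $a = e_{14}$, $b = e_{12}+e_{34}$ for the odd symmetric generators (with $c = e_{23}$, $d = e_{12}-e_{34}$ odd skew), I would tabulate which products land in which subspace and, most importantly, observe that any product of an odd element with $p$-type or $b,c,d$-type elements quickly forces entries into the top-right corner $e_{14}$, after which further multiplication annihilates. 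This nilpotency of the odd part, governed by $(UT_4(F)_1)^k$, is what drives identities \eqref{id:28} ($z_1z_2z_3z_4\equiv 0$, since $A_1^4\subseteq$ a space killed by the grading structure) and the vanishing statements \eqref{id:24}, \eqref{id:32}.

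I would then group the verifications. Identities \eqref{id:3} and \eqref{id:7} are formal consequences of the associative commutator calculus: \eqref{id:3} is the ordinary Jacobi/left-normed rearrangement identity valid in any associative algebra, and \eqref{id:7} follows because, modulo lower identities, iterated commutators $[y_1,y_2,\ldots]$ of even elements stabilize — here I would use that $A_0^+$ is commutative (the even symmetric part is spanned by commuting idempotent-like elements) so that only the $A_0^-$ contributions to the commutator survive and they act in a controlled way. For \eqref{id:2} I would check that $[y_i,y_j]$ always lands in the span of $e_{13}-e_{24}$ (the strictly-upper even part), and that sandwiching any $x$ between two such corner elements is zero by position count. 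The paired/antisymmetric identities \eqref{id:25}, \eqref{id:26}, \eqref{id:33} require tracking a sign: under $\bar s$ the middle variable transposes while the outer odd variables swap, and the symplectic sign $(-1)$ appearing in the $e_{23}$ block (the $-f$ in the matrix formula) is exactly what produces the symmetry $z_1^\pm z_2 z_3^\pm = \pm z_3^\pm z_2 z_1^\pm$; I would verify this on basis elements and read off the sign.

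Finally, the ``commuting/anticommuting across a corner'' identities \eqref{id:34} and \eqref{id:35} I would handle by showing that $[y_1,\ldots,y_a]z$ already lies in $A_1^+$ (respectively, has a definite $\bar s$-symmetry), so that multiplying on either side by the even symmetric element $y^+$ gives equal results, while multiplying by an even skew $y^-$ introduces the compensating minus sign dictated by $(y^-)^* = -y^-$ together with the superinvolution rule $(uv)^* = (-1)^{|u||v|}v^*u^*$. The main obstacle, and the part deserving the most care, will be the sign bookkeeping in \eqref{id:25}, \eqref{id:26}, \eqref{id:33}, and \eqref{id:35}: because $\bar s = s\phi$ composes the symplectic involution with the superautomorphism $\phi$, the effective sign attached to each basis element depends both on its position relative to $J$ and on the power of $A_1$ in which it sits, so I would first pin down $\phi$ explicitly on each of the eight basis elements (confirming it against the displayed matrix formula for $\bar s$) before attempting the product computations, to avoid a sign error propagating through the antisymmetric relations.
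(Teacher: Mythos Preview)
Your plan is correct and matches the paper's own approach, which is simply to verify each identity by direct computation on the explicit homogeneous bases (the paper records only that ``the result is easily obtained by performing simple calculations''). One small imprecision: for \eqref{id:2} the commutator $[y_i,y_j]$ lands in $\mathrm{span}\{e_{13},e_{24}\}$ rather than in $\mathrm{span}\{e_{13}-e_{24}\}$, but your position-count argument still goes through unchanged.
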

\begin{proof}
The result is easily obtained by performing simple calculations.
\end{proof}

Now let  $W = \{ w_1, \ldots, w_k \}$ be a set of symmetric and skew even variables, that is $w_i = y_i^+$ or $w_i = y_i^-$. Denote by $g$ the number of skew even variables inside $W$.

\begin{Lemma}\label{identities hard of ut4}
The following are $*$-identities of $A$:
\begin{itemize}

\item[(i)]  $(w_{1} \cdots w_{k} -  w_{\sigma(1)} \cdots w_{\sigma(k)}) z^+
+(-1)^{g+1} z^+ (w_{k} \cdots w_{1} - w_{\sigma(k)} \cdots w_{\sigma(1)}) \equiv 0 $, 

\item[(ii)]  $(w_{1} \cdots w_{k} -  w_{\sigma(1)} \cdots w_{\sigma(k)}) z^-
+(-1)^{g} z^- (w_{k} \cdots w_{1} - w_{\sigma(k)} \cdots w_{\sigma(1)}) \equiv 0$. 

\end{itemize}
\end{Lemma}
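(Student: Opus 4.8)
The plan is to prove both identities (i) and (ii) simultaneously by exploiting the action of the super-symplectic superinvolution $\bar{s}$ on the left-hand sides. The key observation is that each displayed polynomial is built from an element of the form $u := w_1 \cdots w_k - w_{\sigma(1)} \cdots w_{\sigma(k)}$, which is a difference of two monomials in the even symmetric and even skew variables differing only by a permutation, multiplied on one side by a single odd variable $z^+$ or $z^-$. I would first record the behavior of $\bar{s}$ on the even variables: since each $w_i$ is homogeneous of degree $0$, applying $\bar{s}$ gives $(w_1 \cdots w_k)^{\bar{s}} = w_k^{\bar{s}} \cdots w_1^{\bar{s}}$ with no sign (all degrees are even), and $w_i^{\bar{s}} = \varepsilon_i w_i$ where $\varepsilon_i = +1$ if $w_i = y_i^+$ and $\varepsilon_i = -1$ if $w_i = y_i^-$. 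Hence $u^{\bar{s}} = \bigl(\prod_i \varepsilon_i\bigr)(w_k \cdots w_1 - w_{\sigma(k)} \cdots w_{\sigma(1)})$, and by definition of $g$ as the number of skew even variables, $\prod_i \varepsilon_i = (-1)^g$.

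The heart of the argument is then to show that $u$ itself, as evaluated on $A_0$, lands in a subspace on which left and right multiplication by odd elements are controlled. Concretely, I would compute the image of any evaluation of $u$. Because the two monomials in $u$ differ only by a permutation of commuting-up-to-lower-triangular-corrections factors, the diagonal parts of $w_1\cdots w_k$ and $w_{\sigma(1)}\cdots w_{\sigma(k)}$ coincide, so $u$ evaluates into the strictly upper triangular part of $A_0$, namely $\spa_F\{e_{13}, e_{24}\}$ (the degree-$0$ off-diagonal positions). The decisive structural fact, which I would verify by direct matrix multiplication using the explicit bases listed for $A_0^{\pm}, A_1^{\pm}$, is that for any $\gamma \in \spa_F\{e_{13}, e_{24}\}$ and any odd symmetric $v^+ \in A_1^+$ one has $\gamma v^+ = v^+ \gamma^{\bar{s}}$-type reflection relations, and similarly for odd skew $v^-$; these are precisely the sign patterns $(-1)^{g+1}$ and $(-1)^g$ appearing in the statement, arising from how $\bar{s}$ flips the sign in the $(2,3)$ entry while fixing the others.

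Assembling these pieces, I would write the left-hand side of (i) as $u\,z^+ + (-1)^{g+1} z^+ u^{\bar{s}}$ (after substituting the computation $w_k\cdots w_1 - w_{\sigma(k)}\cdots w_{\sigma(1)} = (-1)^g u^{\bar{s}}$, noting $(-1)^{g+1}(-1)^g = (-1)^{2g+1} = -1$, so the expression becomes $u z^+ - z^+ u^{\bar{s}}$ up to the overall sign bookkeeping), and then show this vanishes on $A$ by the reflection relation between $\gamma z^+$ and $z^+ \gamma^{\bar{s}}$ established above; identity (ii) follows from the same computation with the sign change coming from $v^-$ being skew rather than symmetric. The main obstacle I anticipate is the sign bookkeeping: one must track separately (a) the sign $(-1)^g$ produced when $\bar{s}$ acts on the product of even variables, (b) the intrinsic sign flip that $\bar{s}$ introduces at the $(2,3)$ position, and (c) the symmetric-versus-skew sign of $z^+$ versus $z^-$, and confirm that their product reproduces exactly the prefactors $(-1)^{g+1}$ and $(-1)^g$ in the two cases. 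Getting these three independent sources of sign to combine correctly, rather than the calculation itself, is where the care is needed.
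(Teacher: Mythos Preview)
Your approach is correct and is genuinely different from the paper's. The paper proceeds by a direct, fully explicit computation: it fixes a generic evaluation of the $w_i$ and $z^+$, works out by induction the coefficients of $e_{11},e_{22},e_{33},e_{44},e_{13},e_{24}$ in $w_1\cdots w_k$ (introducing auxiliary sums $\mathcal{S}^{1,\dots,k}$ and $\mathcal{R}^{1,\dots,k}$ and proving the key relation $\mathcal{S}^{1,\dots,k}=(-1)^g\mathcal{R}^{k,\dots,1}$), then computes $w_1\cdots w_k z^+$ and $z^+ w_k\cdots w_1$ separately and checks that their signed sum is invariant under permuting the indices. Your argument replaces this induction by two structural observations: (a) the diagonal part of a product of upper triangular matrices is commutative, so $u=w_1\cdots w_k - w_{\sigma(1)}\cdots w_{\sigma(k)}$ always evaluates into $\spa_F\{e_{13},e_{24}\}$; and (b) applying $\bar{s}$ to the even product reverses the factors and introduces the sign $(-1)^g$, reducing the identity to the single reflection relation $\gamma v^{\pm}=\pm v^{\pm}\gamma^{\bar{s}}$ for $\gamma\in\spa_F\{e_{13},e_{24}\}$, which is a four-line check on the basis vectors. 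Your route is shorter and more conceptual, and it treats (i) and (ii) uniformly; the paper's computation, on the other hand, is entirely self-contained and yields as a by-product the explicit form of the evaluation of $w_1\cdots w_k$, which is not visible in your argument.
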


\begin{proof}
We prove the first item, as the second can be derived in an analogous manner.

Let us consider the following evaluation: 
\begin{equation*}
\begin{split}
w_i &\leadsto \alpha_i (e_{11} \pm e_{44}) + \beta_i (e_{22} \pm e_{33}) + \gamma_i (e_{13} \pm e_{24}), \ i=1, \ldots, k;  \\
z^+ &\leadsto a e_{14} + b (e_{12} + e_{34}).
\end{split}
\end{equation*}


We claim that $w_{1}\cdots w_{k}$ evaluates to
$$ \alpha_{1}\cdots\alpha_{k}\left(e_{11}+(-1)^g e_{44}\right)
+ \beta_{1}\cdots\beta_{k}\left(e_{22}+(-1)^g e_{33}\right) + \mathcal{S}^{1,\ldots,k} e_{13}
+ \mathcal{R}^{1,\ldots,k} e_{24},
$$
where 
\begin{equation*}
\begin{split}
\mathcal{S}^{1,\dots,k} &= \sum_{m=1}^{k} \alpha_{1}\cdots\alpha_{{m-1}} \gamma_{m} \beta_{{m+1}}\cdots\beta_{k}  \ \sgn(w_{{m+1}})\cdots \sgn(w_{{k}}), \\
\mathcal{R}^{1,\ldots,k} &= \sgn(w_{{m}}) \sum_{m=1}^{k}  \beta_{1}\cdots\beta_{{m-1}} \gamma_{m} \alpha_{{m+1}}\cdots\alpha_{k}  \ \sgn(w_{{m+1}})\cdots \sgn(w_{{k}}),
\end{split}
\end{equation*}
where $\sgn(w)=1$ if $w=y^+$ and $\sgn(w)=-1$ otherwise.
Notice that $\alpha_{j}=1$ and $\beta_{j} = 1$ if $j<0$ or $j>k$.

\smallskip

To show this, one can easily check that
$$
\mathcal{S}^{1, 2} = \alpha_{1}\gamma_{2} + \sgn(w_{2}) \gamma_{1} \beta_{2}. 
$$
Moreover, using an easy induction argument, when one multiplies the evaluation of $w_{1} \cdots w_{{k-1}}$ by the evaluation of $w_{k}$, the coefficient of $e_{13}$ is precisely 
$$
\mathcal{S}^{1, \dots, {k-1}, k} =  \alpha_{1} \cdots \alpha_{{k-1}}\gamma_{k} + \sgn(w_{k}) \mathcal{S}^{1, \dots, {k-1}}\cdot \beta_{k}.
$$

Finally, looking at the coefficients of $e_{24}$, we get that 
\begin{equation*}
\begin{split}
\mathcal{R}^{1, 2} &= \sgn(w_{2}) \left(\beta_{1} \gamma_{2}  + \sgn(w_{1}) \gamma_{1}\alpha_{2}\right), \\
\mathcal{R}^{1, \dots, {k-1}, k} &= \sgn(w_{k}) \left( \beta_{1} \cdots \beta_{{k-1}}\gamma_{k} +  \mathcal{R}^{1, \dots, {k-1}}\cdot \alpha_{k} \right).  
\end{split}
\end{equation*}

In what follows, we make use of the following equality:
\begin{equation} 
	\label{eq:s1-s2}
\mathcal{S}^{1, \dots, k} = (-1)^g \cdot \mathcal{R}^{k, \ldots, 1}.
\end{equation}

Now we are ready to finalize the proof. In fact, $w_{1}\cdots w_{k} z^{+}$ evaluates to
$$ 
\alpha_{1}\cdots\alpha_{k}b e_{12} +
(-1)^g \beta_{1}\cdots\beta_{k} b e_{34} +
\left( \alpha_{1}\cdots\alpha_{k} a + \mathcal{S}^{1,\dots,k} b \right) e_{14}.
$$

Similarly, according to~\eqref{eq:s1-s2}, it follows that $z^{+} w_{k}\cdots w_{1}$ evaluates to
\[
\beta_{1}\cdots\beta_{k}b e_{12} +
(-1)^g \alpha_{1} \cdots \alpha_{k} b e_{34} +
(-1)^g \left(\alpha_{1}\cdots\alpha_{k} a + \mathcal{S}^{1,\dots,k} b \right) e_{14}.
\]

Hence, the evaluation of $w_{1}\cdots w_{k} z^{+} + (-1)^{g+1} z^{+} w_{k}\cdots w_{1}$ is
$$ 
\left(\alpha_{1}\cdots\alpha_{k} - (-1)^{g} \beta_{1}\cdots\beta_{k} \right) b (e_{12} - e_{34}).
$$

Note that if we change the order of the indices, the evaluation does not change: in fact, for any $\sigma \in S_k$, $w_{\sigma(1)} \cdots w_{\sigma(k)} z^{+} + (-1)^{g+1} z^{+} w_{\sigma(k)}\cdots w_{\sigma(1)}$ evaluates to
$$ 
\left(\alpha_{\sigma(1)}\cdots\alpha_{\sigma(k)} - (-1)^{g} \beta_{\sigma(1)}\cdots\beta_{\sigma(k)}\right) b (e_{12} - e_{34}),
$$
and this concludes the proof.
\end{proof}

In order to present the next result, let us consider the following polynomials: 
\begin{equation*}
\begin{split}
P_1 &=  y_{i_1}^{+}\cdots y_{i_a}^{+} y_{i_{a+1}}^{-}\cdots y_{i_{a+a'}}^{-} z_1^{-} y_{j_1}^{+} \cdots y_{j_b}^{+} y_{j_{b+1}}^{-} \cdots y_{j_{b+b'}}^{-} z_2^{-} y_{l_1}^{+} \cdots y_{l_c}^{+}y_{l_{c+1}}^{-} \cdots y_{l_{c+c'}}^{-}, \\
P_2 &= z_1^{-} y_{i_1}^{+}\cdots y_{i_a}^{+}y_{i_{a+1}}^{-}\cdots y_{i_{a+a'}}^{-} y_{j_1}^{+} \cdots y_{j_b}^{+} y_{j_{b+1}}^{-} \cdots y_{j_{b+b'}}^{-} z_2^{-} y_{l_1}^{+} \cdots y_{l_c}^{+}y_{l_{c+1}}^{-} \cdots y_{l_{c+c'}}^{-}, \\
P_3 &= z_1^{-} y_{i_1}^{+}\cdots y_{i_a}^{+}y_{i_{a+1}}^{-}\cdots y_{i_{a+a'}}^{-} y_{j_1}^{+} \cdots y_{j_b}^{+} y_{j_{b+1}}^{-} \cdots y_{j_{b+b'}}^{-} y_{l_1}^{+} \cdots y_{l_c}^{+}y_{l_{c+1}}^{-} \cdots y_{l_{c+c'}}^{-} z_2^{-}, \\
P_4 &= y_{i_1}^{+}\cdots y_{i_a}^{+}y_{i_{a+1}}^{-}\cdots y_{i_{a+a'}}^{-} z_1^{-} y_{j_1}^{+} \cdots y_{j_b}^{+} y_{j_{b+1}}^{-} \cdots y_{j_{b+b'}}^{-} y_{l_1}^{+} \cdots y_{l_c}^{+}y_{l_{c+1}}^{-} \cdots y_{l_{c+c'}}^{-} z_2^{-}.
\end{split}
\end{equation*}

\begin{Lemma}\label{identities ut4 n3 + n4 = 2}
The following are $*$-identities of $A$:
\begin{itemize}

\item[(a)] \label{id-(n_1,n_2,0,2)-1} 
$w_{\sigma(1)} \cdots w_{\sigma(k)} z_{1} z_{2} - w_{1} \cdots w_{k} z_{1} z_{2} \equiv 0$;
\vspace{0.1 cm}

\item[(b)] \label{id-(n_1,n_2,0,2)-2} 
$z_{1} w_{\sigma(1)} \cdots w_{\sigma(k)} z_{2} - z_{1} w_{1} \cdots w_{k} z_{2} \equiv 0$;
\vspace{0.1 cm}

\item[(c)] \label{id-(n_1,n_2,0,2)-3} 
$z_{1} z_{2} w_{\sigma(1)} \cdots w_{\sigma(k)} -  z_{1} z_{2} w_{1} \cdots w_{k} \equiv 0$;
\vspace{0.1 cm}

\item[(d)] \label{id-(n_1,n_2,0,2)-4} 
$z^{-} w_{1} \cdots w_{k} z^{+} + (-1)^{g+1} w_{1} \cdots w_{k} z^{-} z^{+} \equiv 0$;
\vspace{0.1 cm}

\item[(e)] \label{id-(n_1,n_2,0,2)-5} 
$z^{+} w_{1} \cdots w_{k} z^{-} + (-1)^{g+1} z^{+} z^{-} w_{1} \cdots w_{k} \equiv 0$;
\vspace{0.1 cm}

\item[(f)] \label{id-(n_1,n_2,0,2)-6} 
$(-1)^{a'+c'} P_1 - (-1)^{c'} P_2 + P_3 - (-1)^{a'} P_4 \equiv 0$.

\end{itemize}

\end{Lemma}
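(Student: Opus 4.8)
The plan is to verify all six identities by direct evaluation on the homogeneous symmetric and skew generators of $A$, extending the computation already carried out in Lemma \ref{identities hard of ut4}. The engine is a short multiplication table for matrix units. Writing $M = w_1 \cdots w_k$ for an even product evaluated as in that lemma, with $\alpha_M = \alpha_1 \cdots \alpha_k$, $\beta_M = \beta_1 \cdots \beta_k$ and $g$ the number of skew even factors, one checks directly that
$$M e_{13} = \alpha_M e_{13}, \quad M e_{24} = \beta_M e_{24}, \quad e_{13} M = (-1)^g \beta_M e_{13}, \quad e_{24} M = (-1)^g \alpha_M e_{24},$$
together with the one-sided rules $e_{12} M = \beta_M e_{12} + \mathcal{R}_M e_{14}$, $e_{23} M = (-1)^g \beta_M e_{23}$, $e_{14} M = (-1)^g \alpha_M e_{14}$, $M e_{34} = \mathcal{S}_M e_{14} + (-1)^g \beta_M e_{34}$ and their mirror images. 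The decisive structural facts are that $e_{14} \cdot z = 0$ for every odd $z$ (there is no odd matrix unit in row $4$), that a product of two odd elements always lands in $\spa\{e_{13}, e_{24}\}$, and the explicit values
$$z^- z^+ = s_1 t_2 e_{24}, \quad z^+ z^- = s_1 t_2 e_{13}, \quad z_1^- z_2^- = s_2 s_1' e_{13} - s_1 s_2' e_{24},$$
obtained from the parametrizations $z^- = s_1 e_{23} + s_2(e_{12}-e_{34})$ and $z^+ = t_1 e_{14} + t_2(e_{12}+e_{34})$ of the odd generators.

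For (a), (b) and (c) I would show that each evaluation depends on the even block $w_1 \cdots w_k$ only through the symmetric quantities $\alpha_M$, $\beta_M$ (and the overall sign $(-1)^g$), so it is unaffected by any permutation $\sigma$. For (c) this is immediate, since $z_1 z_2 \in \spa\{e_{13}, e_{24}\}$ and iterated right multiplication merely rescales by $\alpha_M$ or $\beta_M$; (a) is the symmetric statement. The only subtle case is (b): here $e_{12} M$ carries a spurious term $\mathcal{R}_M e_{14}$, but this term is killed by the trailing variable $z_2$ because $e_{14} z_2 = 0$, so once more only $\alpha_M$ and $\beta_M$ survive.

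For (d) and (e) the argument reduces to a single sign cancellation. Using the table one finds $z^- M z^+ = (-1)^g s_1 t_2 \beta_M e_{24}$ while $M z^- z^+ = s_1 t_2 \beta_M e_{24}$, so that $z^- M z^+ = (-1)^g M z^- z^+$; symmetrically $z^+ M z^- = (-1)^g (z^+ z^-) M$, both landing on $e_{13}$. In each case the prescribed combination collapses to the scalar $(-1)^g + (-1)^{g+1} = 0$, which settles the two identities at once.

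The hard part will be (f), which is genuinely a four-term bookkeeping problem. I would evaluate each $P_i$ by moving the skew odd variables $z_1^-, z_2^-$ through the blocks $M_A, M_B, M_C$ with the table, each skew even factor crossed contributing a $-1$ and so producing the exponents $a', b', c'$. The qualitative split is that in $P_3$ and $P_4$ the inner variable $z_2^-$ sits at the far right, so its contribution $e_{23} z_2^- = -s_2' e_{24}$ retains a $\beta$-weight, whereas in $P_1$ and $P_2$ the trailing block $M_C$ converts it to an $\alpha_C$-weight via $e_{24} M_C = (-1)^{c'} \alpha_C e_{24}$. Reading off the $e_{13}$-coefficients gives four scalars proportional to $(-1)^{c'}\alpha_A, (-1)^{c'}\beta_A, \beta_A, \alpha_A$, and the $e_{24}$-coefficients four scalars proportional to $(-1)^{b'+c'}\alpha_C, (-1)^{a'+b'+c'}\alpha_C, (-1)^{a'+b'+c'}\beta_C, (-1)^{b'+c'}\beta_C$. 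Inserting the prescribed signs $(-1)^{a'+c'}, -(-1)^{c'}, +1, -(-1)^{a'}$ makes the two $\alpha_A$-terms cancel against each other and the two $\beta_A$-terms cancel in the $e_{13}$-component, and likewise the $\alpha_C$- and $\beta_C$-terms cancel in the $e_{24}$-component; this telescoping is precisely what the coefficients of (f) are engineered to produce, and assembling the two components yields the identity. The main obstacle is keeping the signs $(-1)^{a'}, (-1)^{b'}, (-1)^{c'}$ correctly aligned across the four placements so that the pairing of $\alpha$- and $\beta$-terms is exact.
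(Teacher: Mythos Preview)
Your proposal is correct and follows exactly the route the paper takes: the paper's own proof reads in full ``The proof consists of straightforward but lengthy and tedious computations, and is therefore omitted,'' so your explicit multiplication table extracted from Lemma~\ref{identities hard of ut4} and the ensuing case analysis is precisely the computation the authors declined to write out. Your treatment of (a)--(e) is sound (in particular the observation that the $\mathcal{R}_M e_{14}$ term in (b) is annihilated by the trailing odd factor), and the sign pattern you record for (f) does telescope to zero on both the $e_{13}$- and $e_{24}$-components, so the sketch for (f) completes correctly once the bookkeeping is carried out.
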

\begin{proof}
The proof consists of straightforward but lengthy and tedious computations, and is therefore omitted.
\end{proof}

\begin{Lemma}\label{identities ut4 n3 + n4 = 3}
The following are $*$-identities of $A$:
\begin{itemize}

\item[(a)] \label{id-(n_1,n_2,0,3)-2} 
$z_1 z_2 w_{1} \cdots w_{k} z_3 - (-1)^{g} z_1 w_{1} \cdots w_{k} z_2 z_3 \equiv 0$;
\vspace{0.1 cm}



\item[(b)] \label{id-(n_1,n_2,0,3)-4} 
$(-1)^gw_1\cdots w_kz_1z_2z_3-z_1z_2z_3w_1\cdots w_k\equiv0$.

\end{itemize}

\end{Lemma}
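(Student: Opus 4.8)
The plan is to verify both identities by direct evaluation on the homogeneous basis of $A$, following the scheme used in the proof of Lemma~\ref{identities hard of ut4} and relying on two ingredients: the explicit formula recorded there for the evaluation of an even monomial $w_1\cdots w_k$, and the multiplicative structure of the odd part $A_1$. Since the polynomials are multilinear and $\mbox{char}\,F=0$, it is enough to substitute each $w_i$ by a generic homogeneous even element $\alpha_i(e_{11}\pm e_{44})+\beta_i(e_{22}\pm e_{33})+\gamma_i(e_{13}\pm e_{24})$, with the signs dictated by $\sgn(w_i)$, and each $z_j$ by a generic odd element $a_j e_{12}+b_j e_{14}+c_j e_{23}+d_j e_{34}$.

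The first step I would carry out is to record how $A_1$ multiplies: a short calculation shows $A_1^2\subseteq\mbox{span}_F\{e_{13},e_{24}\}$, $A_1^3\subseteq\mbox{span}_F\{e_{14}\}$ and $A_1^4=\{0\}$, the last of these being identity~\ref{id:28}. Consequently a product of two odd substitutions $z_iz_j$ evaluates inside $\mbox{span}_F\{e_{13},e_{24}\}$, while a product of three odd substitutions $z_1z_2z_3$ evaluates to a scalar multiple of $e_{14}$. This nilpotency is precisely what makes these degree-three-in-$z$ identities amenable to a finite, path-based computation.

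For item~(b) I would write $z_1z_2z_3\leadsto\lambda e_{14}$ and then use that multiplying $e_{14}$ by the evaluation of $w_1\cdots w_k$ only sees the diagonal blocks: on the left it produces $(\alpha_1\cdots\alpha_k)\,e_{14}$, coming from the $e_{11}$-entry, and on the right it produces $(-1)^g(\alpha_1\cdots\alpha_k)\,e_{14}$, coming from the $e_{44}$-entry whose coefficient carries the factor $(-1)^g$ by the formula of Lemma~\ref{identities hard of ut4}. Hence $(-1)^g w_1\cdots w_k\,z_1z_2z_3$ and $z_1z_2z_3\,w_1\cdots w_k$ both evaluate to $(-1)^g\lambda(\alpha_1\cdots\alpha_k)e_{14}$, and the identity follows immediately.

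Item~(a) is the more delicate case, and I expect the sign bookkeeping to be the main obstacle. Here each of $z_1z_2$ and $z_2z_3$ lands in $\mbox{span}_F\{e_{13},e_{24}\}$, and the surrounding letters force a unique surviving matrix path from row $1$ to column $4$. Concretely, I would compute $z_1z_2\leadsto a_1b_3\,e_{13}+a_3b_4\,e_{24}$; multiplying by $w_1\cdots w_k$ on the right turns the $e_{13}$-summand into $(-1)^g(\beta_1\cdots\beta_k)e_{13}$ (via the $e_{33}$-block), and the trailing $z_3$ then sends $e_{13}\mapsto c_4 e_{14}$ while killing the $e_{24}$-summand, so $z_1z_2\,w_1\cdots w_k\,z_3$ evaluates to $(-1)^g a_1b_3c_4(\beta_1\cdots\beta_k)e_{14}$. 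In parallel, in $z_1\,w_1\cdots w_k\,z_2z_3$ only the $e_{12}$-component of $z_1$ survives, since $e_{12}e_{24}=e_{14}$ is the only nonzero product at the final multiplication, and this yields $a_1b_3c_4(\beta_1\cdots\beta_k)e_{14}$; the two expressions then agree after the factor $(-1)^g$. The crux, and the step most prone to error, is checking that along each chain exactly one matrix unit survives and that the sign $(-1)^g$ produced by the $e_{33}$/$e_{44}$ coefficients in the formula of Lemma~\ref{identities hard of ut4} is exactly the $(-1)^g$ appearing in the statement; once the surviving monomials are isolated, the coefficients match on the nose.
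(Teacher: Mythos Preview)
Your approach is correct and is exactly the ``straightforward computation'' the paper alludes to: you exploit that any product of three odd elements lies in $\mbox{span}_F\{e_{14}\}$ and then read off the surviving diagonal coefficients from the formula of Lemma~\ref{identities hard of ut4}. The only blemish is a notational slip --- you declare $z_j\leadsto a_j e_{12}+b_j e_{14}+c_j e_{23}+d_j e_{34}$ but in the computation of $z_1z_2$ you tacitly switch to the labeling $z_1\leadsto \sum a_i e_{*}$, $z_2\leadsto \sum b_i e_{*}$, $z_3\leadsto \sum c_i e_{*}$; once that is read consistently, all coefficients and signs check out.
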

\begin{proof}
The proof reduces to a straightforward computation, taking into account that the evaluation of any monomial involving three odd variables always lies in $\mbox{span}_F \{ e_{14} \}$.
\end{proof}

Now let us prove the main result of this section. 

\begin{Theorem}\label{ideal of ut4}
Let $A$ be the $*$-algebra $UT_4(F)$ with elementary $\mathbb{Z}_2$-grading induced by $(0,1,0,1)$ and superinvolution  $\bar{s}$. Then the $T_2^*$-ideal of identities of this algebra is generated by the polynomials listed in Lemmas \ref{identities of ut4}, \ref{identities hard of ut4}, \ref{identities ut4 n3 + n4 = 2} and \ref{identities ut4 n3 + n4 = 3}.
\end{Theorem}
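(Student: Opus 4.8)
The plan is to prove that the listed polynomials generate the entire $T_2^*$-ideal $\mathrm{Id}^*(A)$ by the standard codimension-matching strategy for relatively free algebras. Let $I$ denote the $T_2^*$-ideal generated by all the polynomials in Lemmas~\ref{identities of ut4}, \ref{identities hard of ut4}, \ref{identities ut4 n3 + n4 = 2}, and \ref{identities ut4 n3 + n4 = 3}. Since every one of these is already known to be a $*$-identity of $A$, we have the trivial inclusion $I \subseteq \mathrm{Id}^*(A)$, and the whole task is to establish the reverse inclusion. Because $\mathrm{char}\,F = 0$, it suffices to work multilinearly: I would fix a decomposition $n = n_1 + n_2 + n_3 + n_4$ and show that the relations in $I$ are enough to rewrite every multilinear monomial in $P_{n_1,n_2,n_3,n_4}$ as a linear combination of a fixed, explicitly described spanning set $B_{n_1,n_2,n_3,n_4}$ modulo $I$. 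The goal is then to check that the cardinality of $B_{n_1,n_2,n_3,n_4}$ equals $c_{n_1,n_2,n_3,n_4}(A)$, computed directly from the structure of $A$; combined with $I \subseteq \mathrm{Id}^*(A)$ this forces $P_{n_1,n_2,n_3,n_4}\cap I = P_{n_1,n_2,n_3,n_4}\cap \mathrm{Id}^*(A)$ for all tuples, hence $I = \mathrm{Id}^*(A)$.

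The central combinatorial work is the construction of the spanning set and the reduction procedure. Here the identity \eqref{id:28}, $z_1 z_2 z_3 z_4 \equiv 0$, is the crucial simplifier: any monomial containing four or more odd variables is zero modulo $I$, so only the cases $n_3 + n_4 \in \{0,1,2,3\}$ survive, and the four Lemmas are organized exactly along these cases. For $n_3+n_4 = 0$ (purely even variables) I would use the commutator-type and Jacobi identities \eqref{id:3}, \eqref{id:7} together with Lemma~\ref{identities hard of ut4} to put products of even symmetric/skew variables into a normal form governed by $\mathcal{S}$ and $\mathcal{R}$. For one odd variable, identities \eqref{id:34}, \eqref{id:35} let me pull even commutators across the odd variable with a controlled sign. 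For two odd variables, Lemma~\ref{identities ut4 n3 + n4 = 2} lets me (a)–(c) freely reorder the even variables in each of the three "slots" delimited by the two $z$'s, while (d)–(f) collapse relations between the distinct placements $P_1,\dots,P_4$ of the two odd variables; and for three odd variables Lemma~\ref{identities ut4 n3 + n4 = 3} reduces everything because, as noted in its proof, any monomial with three odd variables evaluates into $\mathrm{span}_F\{e_{14}\}$, giving a one-dimensional target. Identities \eqref{id:24}–\eqref{id:26}, \eqref{id:32}, \eqref{id:33} handle the parity-specific patterns among the odd variables.

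I expect the genuine obstacle to be the \emph{lower bound}: showing that the constructed normal forms are actually linearly independent modulo $\mathrm{Id}^*(A)$, equivalently that I have not over-reduced. The clean way to do this is to compute $c_{n_1,n_2,n_3,n_4}(A)$ independently, by exhibiting enough evaluations on $A$ that separate the candidate basis elements. Concretely, I would use the explicit bases of $A_0^\pm$ and $A_1^\pm$ given in the text and evaluate the normal-form monomials at generic symmetric/skew matrix units, reading off coefficients in the positions $e_{12}, e_{34}, e_{13}, e_{24}, e_{14}$ (and the diagonal) exactly as in the proof of Lemma~\ref{identities hard of ut4}. The combinatorics of which monomials produce linearly independent coefficient vectors in these five-or-six off-diagonal slots is delicate, and matching the count to $\lvert B_{n_1,n_2,n_3,n_4}\rvert$ for every tuple is where the bookkeeping is heaviest. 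Once both inequalities on dimensions are in hand for all multidegrees, equation \eqref{cod} assembles them into the global statement and the theorem follows.
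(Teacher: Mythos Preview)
Your proposal is correct and follows essentially the same approach as the paper: reduce modulo $I$ to explicit normal forms in each of the cases $n_3+n_4\in\{0,1,2,3\}$, then prove linear independence of these normal forms modulo $\mathrm{Id}^*(A)$ by exhibiting separating matrix evaluations on the bases of $A_0^\pm$, $A_1^\pm$. One small correction: Lemma~\ref{identities hard of ut4} involves an odd variable and belongs to the $n_3+n_4=1$ case rather than the purely even case, which in the paper is handled via Poincar\'e--Birkhoff--Witt together with identities \eqref{id:2}, \eqref{id:3}, \eqref{id:7}.
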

\begin{proof}
Let $J$ be the $T_2^*$-ideal generated by the polynomials listed in Lemmas \ref{identities of ut4}, \ref{identities hard of ut4}, \ref{identities ut4 n3 + n4 = 2} and \ref{identities ut4 n3 + n4 = 3}. Clearly we have that $J \subseteq \I^*(A)$.

In order to prove the opposite inclusion, first we find a set of generators of $P_n^*$ modulo $P_n^*\cap J$, for all $n\geq 1$. To this end, let us consider $P^*_{n_1,\ldots , n_4}$, where $n_1+\cdots +n_4=n.$ Since $z_1 z_2 z_3 z_4 \equiv 0$, then it must be $n_3+n_4 \leq 3.$ Thus we have to consider the following cases. 

\medskip

\noindent \textbf{Case 1:} $n_3 = n_4 = 0$. 

\smallskip

In this case we are dealing with polynomials containing just even variables. By the Poincar\'{e}-Birkhoff-Witt theorem, any polynomial can be written as a linear combination of products of the type 
$$
y^+_{i_1}\cdots y^+_{i_a}y^-_{j_1}\cdots y^-_{j_b} \omega_1 \cdots \omega_t,
$$
where $\omega_1,\ldots, \omega_t$ are left normed commutators in the $y^+_i$'s and $y^-_i$'s, $i_1< \cdots <i_a$ and $j_1 <\cdots <j_b$.
By the identities~\eqref{id:2}, ~\eqref{id:3} and~\eqref{id:7}, we can partially order the variables inside the commutator and we can distinguish four types of generators: 
\begin{enumerate}[(A)]

\item \label{n1,n2,0,0:gen:A} $y_{i_1}^{+} \dots y_{i_a}^{+}y_{j_1}^{-}\dots y_{j_b}^{-}$,
\vspace{0.1 cm}

\item \label{n1,n2,0,0:gen:B} $y_{i_1}^{+} \dots y_{i_a}^{+}y_{j_1}^{-}\dots y_{j_b}^{-}[y_{k}^{+}, y_{l_1}^{+}, \dots, y_{l_c}^{+}, y_{m_1}^{-}, \dots, y_{m_d}^{-}]$, where 
    $$
    k>l_1<\dots < l_c, \ m_1<\dots <m_d, 
    $$

\item \label{n1,n2,0,0:gen:C} $y_{i_1}^{+} \dots y_{i_a}^{+}y_{j_1}^{-}\dots y_{j_b}^{-}[y_{k}^{-}, y_{l_1}^{-}, \dots, y_{l_c}^{-}, y_{m_1}^{+}, \dots, y_{m_d}^{+}]$, where 
    $$ k>l_1<\dots < l_c, \ m_1<\dots <m_d, 
    $$

\item\label{n1,n2,0,0:gen:D} $y_{i_1}^{+} \dots y_{i_a}^{+}y_{j_1}^{-}\dots y_{j_b}^{-}[y_{l_1}^{+}, y_{m_1}^{-}, \dots, y_{m_d}^{-}, y_{l_2}^{+}, \dots, y_{l_c}^{+}]$, where 
    $$ 
    m_1<\dots <m_d, \ l_1< l_2<\dots < l_c.
    $$      
\end{enumerate}

Notice that in the last three types, the commutators appear and the first two variables are exactly of the required kind. 

We next show that these polynomials are linearly independent modulo $\I^*(A)$. To this end, let $f\in \I^*(A)$ be a linear combination of the above polynomials and fix a type (A) polynomial $y_{i_1}^+\cdots y_{i_a}^+y_{j_1}^-\cdots y_{j_b}^-$. Performing the evaluation 
$$
\mathcal{E}_1:
\begin{cases}
	y_{i_1}^{+}, \dots, y_{i_a}^{+} \leadsto I \\
	y_{j_1}^{-}, \dots, y_{j_b}^{-} \leadsto e_{11}-e_{44} 
\end{cases}
$$
we see that the polynomial above evaluates to $e_{11}+(-1)^be_{44}$ while all other polynomials of types \eqref{n1,n2,0,0:gen:B}, \eqref{n1,n2,0,0:gen:C} and \eqref{n1,n2,0,0:gen:D} vanish. Therefore the coefficient of the polynomial of type \eqref{n1,n2,0,0:gen:A} is zero.

Let us now consider the type \eqref{n1,n2,0,0:gen:B} polynomial
\[
y_{i_1}^{+} \dots y_{i_a}^{+}y_{j_1}^{-}\dots y_{j_b}^{-}[y_{k}^{+}, y_{l_1}^{+}, \dots, y_{l_c}^{+}, y_{m_1}^{-}, \dots, y_{m_d}^{-}] 
\]
where $a$ is maximal, and $b$ is maximal among all the polynomials of type (A) with $a$ maximal. Consider the evaluation
$$
\mathcal{E}_2:
\begin{cases}
	y_{i_1}^{+}, \dots, y_{i_a}^{+} \leadsto I \\
	y_{j_1}^{-}, \dots, y_{j_b}^{-} \leadsto e_{11} - e_{22} + e_{33} -e_{44} \\
	y_k^{+} \leadsto e_{13}+e_{24} \\ 
	y_{l_1}^{+}, \dots, y_{l_c}^{+} \leadsto e_{11}+e_{44} \\
	y_{m_1}^{-}, \dots, y_{m_d}^{-} \leadsto e_{22}-e_{33}
\end{cases}.
$$
It follows that $y_{i_1}^{+} \dots y_{i_a}^{+}y_{j_1}^{-}\dots y_{j_b}^{-}[y_{k}^{+}, y_{l_1}^{+}, \dots, y_{l_c}^{+}, y_{m_1}^{-}, \dots, y_{m_d}^{-}] $ evaluates to 
\[
(-1)^d\left( (-1)^c e_{13} +(-1)^b e_{24} \right), 
\]
while all the other polynomials evaluate to $0$
(note that $y_{l_1}^{+}$ in~\eqref{n1,n2,0,0:gen:D} can never be $y_{k}^{+}$ in~\eqref{n1,n2,0,0:gen:B}, and also that the matrix $e_{11}-e_{22}+e_{33}-e_{44}$ lies in the centralizer of the set $\{e_{13},e_{24}\}$). Therefore the coefficient of the polynomial above is zero. By varying the indices, we obtain that the coefficients of all the polynomials of type~\eqref{n1,n2,0,0:gen:B} with $a$ and $b$ maximal are zero as well. We proceed recursively to get the same conclusion for all the remaining polynomials of type~\eqref{n1,n2,0,0:gen:B}.  

Now we consider a type \eqref{n1,n2,0,0:gen:D} 
\[
y_{i_1}^{+} \dots y_{i_a}^{+}y_{j_1}^{-}\dots y_{j_b}^{-}[y_{l_1}^{+}, y_{m_1}^{-}, \dots, y_{m_d}^{-}, y_{l_2}^{+}, \dots, y_{l_c}^{+}],
\]
where $a$ and $b$ are maximal as before. Under the evaluation
$$
\mathcal{E}_3:
\begin{cases}
	y_{i_1}^{+}, \dots, y_{i_a}^{+} \leadsto I \\
	y_{j_1}^{-}, \dots, y_{j_b}^{-} \leadsto e_{11} - e_{22} + e_{33} -e_{44} \\
	y_{l_1}^{+} \leadsto e_{13}+e_{24} \\ 
	y_{l_2}^{+}, \dots, y_{l_c}^{+} \leadsto e_{22}+e_{33} \\
	y_{m_1}^{-}, \dots, y_{m_d}^{-} \leadsto e_{11}-e_{44}
\end{cases},
$$
we get that the polynomial above evaluates to $(-1)^d(e_{13}+(-1)^{b+c}e_{24})$ while all the others ones evaluates to $0$. Hence the coefficient of this polynomial is zero. By varying the coefficients and using recursion as we drop the maximal length of $y^+$ and $y^-$ on the left of the commutator, we get that the coefficients of all type \eqref{n1,n2,0,0:gen:D} polynomials are zero.


Finally, in order to show that the coefficients of all the polynomials of type~\eqref{n1,n2,0,0:gen:C} are zero it is sufficient to start with the polynomial
\[
y_{i_1}^{+} \dots y_{i_a}^{+}y_{j_1}^{-}\dots y_{j_b}^{-}[y_{k}^{-}, y_{l_1}^{-}, \dots, y_{l_c}^{-}, y_{m_1}^{+}, \dots, y_{m_d}^{+}]
\]
with $a$ and $b$ maximal and perform the evaluation
$$
\mathcal{E}_4:
\begin{cases}
	y_{i_1}^{+}, \dots, y_{i_a}^{+} \leadsto I \\
	y_{j_1}^{-}, \dots, y_{j_b}^{-} \leadsto e_{11} - e_{22} + e_{33} -e_{44} \\
	y_k^{-} \leadsto e_{13}-e_{24} \\ 
	y_{l_1}^{-}, \dots, y_{l_c}^{-} \leadsto e_{11}-e_{44} \\
	y_{m_1}^{+}, \dots, y_{m_d}^{+} \leadsto e_{22}+e_{33}
\end{cases}.
$$

In fact, the polynomial above is the only one with a non-zero evaluation, namely
\[ 
(-1)^c \left( e_{13} + (-1)^{b+d+1} e_{24} \right).
\]

\medskip

\noindent \textbf{Case 2:} $n_3+ n_4 = 1$. 

\smallskip

Assume first $n_3 = 1 $ and $ n_4 = 0$. Let $n = n_1 + n_2 + 1$ and write $W = \{ w_1, \ldots, w_{n_1 + n_2} \}$ to denote the set of symmetric and skew even variables, that is $w_i = y_i^+$ or $w_i = y_i^-$. In this case, we are dealing with polynomials of the kind
\begin{equation}  \label{polys n1 n2 1}
w_{\sigma(1)} \cdots w_{\sigma(i)} z^+ w_{\sigma(i+1)} \cdots w_{\sigma(n_1 + n_2)}, \ \sigma \in S_{n_1+n_2}.
\end{equation}

By identity $(i)$ of Lemma \ref{identities hard of ut4}, we can order the ``right side'' in~\eqref{polys n1 n2 1}:
$$
w_{\sigma(1)} \cdots w_{\sigma(i)} z^+ 
y_{r_1}^{+} \cdots y_{r_e}^{+} y_{s_1}^{-} \cdots y_{s_f}^{-},
$$
where $r_1< \cdots <r_e$, $s_1< \cdots <s_f$.

By Poincar\'e-Birkhoff-Witt theorem and identity~\eqref{id:2} we can order the ``left side'':
\begin{equation} \label{eq:polys-2}
y_{i_1}^{+} \cdots y_{i_a}^{+} y_{j_1}^{-} \cdots y_{j_b}^{-} [w_1, \ldots, w_c] z^{+} y_{r_1}^{+}\cdots y_{r_e}^{+} y_{s_1}^{-} \cdots y_{s_f}^{-},
\end{equation}
where $i_1< \cdots <i_a$, $j_1< \cdots <j_b$, and the variables inside the commutator are ordered as in~\eqref{n1,n2,0,0:gen:A}, \eqref{n1,n2,0,0:gen:B} or~\eqref{n1,n2,0,0:gen:C} of Case 1.

When the commutator actually appears, according to the identities~\eqref{id:34} and~\eqref{id:35}, we may assume that $z^{+}$ is in the last position.

In conclusion we have the following types of generators:
\begin{enumerate}[(A)]

\item\label{n1,n2,1,0:gen:D} $y_{i_1}^{+} \dots y_{i_a}^{+}y_{j_1}^{-}\dots y_{j_b}^{-} z^+ y_{r_1}^{+} \cdots y_{r_e}^{+} y_{s_1}^{-} \cdots y_{s_f}^{-}$, where 
    $$ 
    i_1< \cdots <i_a, \ j_1 < \cdots <j_b, \ r_1< \cdots < r_e, \ s_1 < \cdots <s_f,
    $$

	\item \label{n1,n2,1,0:gen:A} $y_{i_1}^{+} \dots y_{i_a}^{+}y_{j_1}^{-}\dots y_{j_b}^{-}[y_{k}^{+}, y_{l_1}^{+}, \dots, y_{l_c}^{+}, y_{m_1}^{-}, \dots, y_{m_d}^{-}] z^+$, where 
    $$
   i_1< \cdots <i_a, \ j_1 < \cdots <j_b, \ k>l_1<\dots < l_c, \ m_1<\dots <m_d, 
    $$
	\item \label{n1,n2,1,0:gen:B} $y_{i_1}^{+} \dots y_{i_a}^{+}y_{j_1}^{-}\dots y_{j_b}^{-}[y_{k}^{-}, y_{l_1}^{-}, \dots, y_{l_c}^{-}, y_{m_1}^{+}, \dots, y_{m_d}^{+}] z^+$, where 
    $$i_1< \cdots <i_a, \ j_1 < \cdots <j_b, \ k>l_1<\dots < l_c, \ m_1<\dots <m_d, 
    $$
	\item\label{n1,n2,1,0:gen:C} $y_{i_1}^{+} \dots y_{i_a}^{+}y_{j_1}^{-}\dots y_{j_b}^{-}[y_{l_1}^{+}, y_{m_1}^{-}, \dots, y_{m_d}^{-}, y_{l_2}^{+}, \dots, y_{l_c}^{+}] z^+$, where 
    $$ 
    i_1< \cdots <i_a, \ j_1 < \cdots <j_b, \ m_1<\dots <m_d, \ l_1< l_2<\dots < l_c.
    $$

\end{enumerate}

Notice that in the last three types, the commutators appear and the first two variables are exactly of the required kind. 

We next show that these polynomials are linearly independent modulo $\I^*(A)$.
Consider the evaluation
$$
\mathcal{E}:
\begin{cases}
	y_{i_1}^{+}, \dots, y_{i_a}^{+} \leadsto e_{11} + e_{44} \\
	y_{j_1}^{-}, \dots, y_{j_b}^{-} \leadsto e_{11} - e_{44} \\
	z^{+} \leadsto e_{12} + e_{34} \\ 
	y_{r_1}^{+}, \dots, y_{r_e}^{+} \leadsto e_{22} + e_{33} \\
	y_{s_1}^{-}, \dots, y_{s_f}^{-} \leadsto e_{22} - e_{33}
\end{cases}.
$$
We get that  $y_{i_1}^{+}\dots y_{i_a}^{+} y_{j_1}^{-} \dots y_{j_b}^{-} z^{+} y_{r_1}^{+}\dots y_{r_e}^{+} y_{s_1}^{-} \dots y_{s_f}^{-}$ evaluates to $e_{12}$, while all the other polynomials evaluate to $0$.
By varying the indices we obtain that the coefficients of all the polynomials of type~\eqref{n1,n2,1,0:gen:D} are zero.

In order to show that all the other polynomials are linearly independent, it is sufficient to evaluate $z^+$ to $e_{12} + e_{34}$ and the $y$'s as in Case 1.

\medskip

Notice that the case $n_3 = 0$ and $n_4 = 1$ can be treated analogously, using identity $(ii)$ of Lemma \ref{identities hard of ut4}.

\medskip

\noindent \textbf{Case 3:} $n_3 + n_4 = 2$. 

\smallskip

Assume first $n_3 = 0 $ and $n_4 = 2$. In this case, we are dealing with polynomials of the kind
\begin{equation}  \label{polys n1 n2 0 2}
w_{\sigma(1)} \cdots w_{\sigma(i)} z_k^- w_{\sigma(i+1)} \cdots w_{\sigma(i+j)} z_h^- w_{\sigma(i+j+1)} \cdots w_{\sigma(n_1+n_2)}, \ \sigma \in S_{n_1+n_2}.
\end{equation}

According to the identities $(a)$, $(b)$ and $(c)$ of Lemma \ref{identities ut4 n3 + n4 = 2}, we can order the even variables in \eqref{polys n1 n2 0 2}, namely
\begin{equation*}
y_{i_1}^{+}\cdots y_{i_a}^{+}y_{i_{a+1}}^{-} \cdots y_{i_{a+a'}}^{-} z_k^{-} y_{j_1}^{+} \cdots y_{j_b}^{+} y_{j_{b+1}}^{-} \cdots y_{j_{b+b'}}^{-} z_h^{-} y_{l_1}^{+} \cdots y_{l_c}^{+}y_{l_{c+1}}^{-} \cdots y_{l_c}^{-},
\end{equation*}
where $i_1<\cdots <i_a$, $i_{a+1}<\cdots <i_{a+a'}$, $j_1<\cdots <j_b$, $j_{b+1}<\cdots <j_{b+b'}$, $l_1<\cdots <l_c$ and $l_{c+1}<\cdots <l_{c+c'}$. 

Finally, by identity $(f)$ of Lemma \ref{identities ut4 n3 + n4 = 2}, we can reduce a polynomial of type $P_1$ (see the polynomials presented before Lemma \ref{identities ut4 n3 + n4 = 2}), in which there are three groups of $y$'s, to a polynomial in which at most two groups of $y$'s appear (types $P_2$, $P_3$ and $P_4$).

In conclusion, we obtain the following types of generators:
\begin{enumerate}[(A)]
	\item\label{n1,n2,0,2:gen:A} $y_{i_1}^{+}\cdots y_{i_a}^{+}y_{i_{a+1}}^{-}\cdots y_{i_{a+a'}}^{-} z_k^{-} y_{j_1}^{+} \cdots y_{j_b}^{+} y_{j_{b+1}}^{-} \cdots y_{j_{b+b'}}^{-} z_h^{-}$,
    \vspace{0.1 cm}
	\item\label{n1,n2,0,2:gen:B} $z_k^{-} y_{l_1}^{+} \cdots y_{l_c}^{+}y_{l_{c+1}}^{-} \cdots y_{l_{c+c'}}^{-} z_h^{-} y_{m_1}^{+} \cdots y_{m_{d}}^{+}y_{m_{d+1}}^{-} \cdots y_{m_{d+d'}}^{-}$,
\end{enumerate}
where 
$i_1<\cdots <i_a$, $i_{a+1}<\cdots <i_{a+a'}$, $j_1<\cdots <j_b$, $j_{b+1}<\cdots <j_{b+b'}$, $l_1<\cdots <l_c$,  $l_{c+1}<\cdots <l_{c+c'}$, $m_1< \dots <m_d$, $m_{d+1}< \dots <m_{d+d'}$ and $d+d'\neq 0$.

We next show that these polynomials are linearly independent modulo $\I^*(A)$.
Consider the evaluation
$$
\mathcal{E}:
\begin{cases}
	y_{i_1}^{+}, \dots, y_{i_a}^{+} \leadsto e_{11} + e_{44} \\
	y_{i_{a+1}}^{-}, \dots, y_{i_{a+a'}}^{-} \leadsto e_{11} - e_{44} \\
    y_{j_1}^{+}, \dots, y_{j_b}^{+} \leadsto e_{22} + e_{33} \\
    y_{j_{b+1}}^{-}, \dots, y_{j_{b+b'}}^{-} \leadsto e_{22} - e_{33} \\
	z_k^{-} \leadsto e_{12} - e_{34} \\ 
	z_h^{-} \leadsto e_{23} \\ 
\end{cases}.
$$
We get that $y_{i_1}^{+}\dots y_{i_a}^{+}y_{i_{a+1}}^{-}\dots y_{i_{a+a'}}^{-} z_k^{-} y_{j_1}^{+} \dots y_{j_b}^{+} y_{j_{b+1}}^{-} \dots y_{j_{b+b'}}^{-} z_h^{-} $ evaluates to $ e_{13}$,
$z_h^- y_{j_1}^{+}\dots y_{j_b}^{+} y_{j_{b+1}}^{-} \cdots y_{j_{b+b'}}^{-} z_k^{-} y_{i_1}^{+} \dots y_{i_a}^{+} y_{i_{a+1}}^{-} \dots y_{i_{a+a'}}^{-} $ evaluates to $(-1)^{a^\prime+b^\prime+1}e_{24}$,
while all the other polynomials evaluate to $0$.
By varying the indices we obtain that the coefficients of all the above polynomials are zero.

\smallskip

Now consider $n_3 = n_4 = 1$. Here we proceed exactly as before, taking into account also the identities $(d)$ and $(e)$ of Lemma \ref{identities ut4 n3 + n4 = 2}. We obtain the following types of generators:

\begin{enumerate}[(A)]
	\item\label{n1,n2,1,1:gen:A} $y_{i_1}^{+} \cdots y_{i_a}^{+}y_{i_{a+1}}^{-} \cdots y_{i_{a+a'}}^{-} z_{m_1}^{+} z_{m_2}^{-} y_{j_1}^{+} \cdots y_{j_b}^{+} y_{j_{b+1}}^{-} \cdots y_{j_{b+b'}}^{-} $, 
    \vspace{0.1 cm}
    
	\item\label{n1,n2,1,1:gen:B} $y_{h_1}^{+}\cdots y_{h_c}^{+} y_{h_{c+1}}^{-}\cdots y_{h_{c+c'}}^{-} z_{s_1}^{-} z_{s_2}^{+} y_{l_1}^{+} \cdots y_{l_d}^{+}y_{l_{d+1}}^{-} \cdots y_{l_{d+d'}}^{-} $,
\end{enumerate}
where $i_1< \cdots <i_a$, $i_{a+1}< \cdots <i_{a+a'}$, $j_1< \cdots <j_b$, $j_{b+1}< \dots <j_{b+b'}$, $h_1< \cdots <h_c$, $h_{c+1}< \cdots <h_{c+c'}$, $l_1< \cdots <l_d$ and $l_{d+1}< \cdots <l_{d+d'}$.

We next show that these polynomials are linearly independent modulo $\I^*(A)$.
Consider the evaluation
$$
\mathcal{E}:
\begin{cases}
	y_{i_1}^{+}, \dots, y_{i_a}^{+} \leadsto e_{11} + e_{44} \\
	y_{i_{a+1}}^{-}, \dots, y_{i_{a+a'}}^{-} \leadsto e_{11} - e_{44} \\
        y_{j_1}^{+}, \dots, y_{j_b}^{+} \leadsto e_{22} + e_{33} \\
        y_{j_{b+1}}^{-}, \dots, y_{j_{b+b'}}^{-} \leadsto e_{22} - e_{33} \\
        z_{m_1}^{+} \leadsto e_{12} + e_{34} \\ 
	z_{m_2}^{-} \leadsto e_{23} \\ 
\end{cases}.
$$
It follows that $y_{i_1}^{+} \dots y_{i_a}^{+}y_{i_{a+1}}^{-} \dots y_{i_{a+a'}}^{-} z_{m_1}^{+} z_{m_2}^{-} y_{j_1}^{+} \dots y_{j_b}^{+} y_{j_{b+1}}^{-} \dots y_{j_{b+b'}}^{-} $ evaluates to $(-1)^{b^\prime}e_{13}$, whereas
$y_{j_1}^{+} \dots y_{j_b}^{+} y_{j_{b+1}}^{-} \dots y_{j_{b+b'}}^{-} z_{m_2}^{-} z_{m_1}^{+} y_{i_1}^{+} \dots y_{i_a}^{+}y_{i_{a+1}}^{-} \dots y_{i_{a+a'}}^{-} $ evaluates to $(-1)^{a^\prime}e_{24}$,
and all the other polynomials evaluate to $0$. By varying the indices we obtain that the coefficients of the polynomials of both type are zero.

Finally, according to identity~\eqref{id:24}, it is not necessary to consider the case $n_3 = 2$ and $n_4 = 0$.

\medskip

\noindent \textbf{Case 4:} $n_3 + n_4 = 3$. 

\smallskip

Assume first $n_3 = 0$ and $n_4 = 3$. By Lemma \ref{identities ut4 n3 + n4 = 3}, we have to consider just polynomials of the form
\begin{itemize}
    \item $z_1^{-}z_2^{-}z_3^{-} w_1 \cdots w_k$;
    \vspace{0.1 cm}
    \item $z_1^{-} z_2^{-} w_1 \cdots w_k z_3^{-} v_1 \cdots v_h$.
\end{itemize}

By identities $(a)$, $(b)$ and $(c)$ of Lemma \ref{identities ut4 n3 + n4 = 2}, we can order the $w$'s and $v$'s. Moreover, we use identity~\eqref{id:26} to order the $z^-$'s.
In conclusion, we obtain the following types of generators:
\begin{enumerate}[(A)]
	\item\label{n1,n2,0,3:gen:A} $z_{l_1}^{-}z_{m}^{-}z_{l_2}^{-} y_{i_1}^{+} \cdots y_{i_k}^{+}y_{i_{k+1}}^{-} \cdots y_{i_{k+k'}}^{-}$,
    \vspace{0.1 cm}
	\item\label{n1,n2,0,3:gen:B} $z_{j_1}^{-}z_{t}^{-} y_{h_1}^{+}\cdots y_{h_a}^{+} y_{h_{a+1}}^{-}\cdots y_{h_{a+a'}}^{-} z_{j_2}^{-} y_{r_1}^{+}\cdots y_{r_b}^{+}y_{r_{b+1}}^{-}\cdots y_{r_{b+b'}}^{-}$,
\end{enumerate}
where $l_1 < l_2$, $j_1 < j_2$, $i_1< \cdots <i_k$, $i_{k+1}< \cdots <i_{k+k'}$, $h_1< \cdots <h_a$, $h_{a+1}< \cdots <h_{a+a'}$, $r_1< \cdots <r_b$, $r_{b+1}< \cdots <r_{b+b'}$ and $a+a' \geq 1$.

We next show that these polynomials are linearly independent modulo $\I^*(A)$.
Consider the evaluation
$$
\mathcal{E}_1:
\begin{cases}
   	z_{l_1}^{-}, z_{l_2}^{-} \leadsto e_{12} - e_{34} \\ 
	z_m^{-} \leadsto e_{23} \\ 
	y_{i_1}^{+}, \dots, y_{i_k}^{+} \leadsto e_{11} + e_{44} \\
	y_{i_{k+1}}^{-}, \dots, y_{i_{k+k'}}^{-} \leadsto e_{11} - e_{44} \\
\end{cases}.
$$
We get that $z_{l_1}^{-}z_{m}^{-}z_{l_2}^{-} y_{i_1}^{+}\cdots y_{i_k}^{+} y_{i_{k+1}}^{-}\cdots y_{i_{k+k'}}^{-}$ evaluates to $(-1)^{k^\prime+1}e_{14}$, and all the other polynomials evaluate to $0$. By varying the indices we obtain that the coefficients of the polynomials of type~\eqref{n1,n2,0,3:gen:A} are zero.

By making the evaluation
$$
\mathcal{E}_2:
\begin{cases}
	z_{j_1}^{-}, z_{j_2}^{-} \leadsto e_{12} - e_{34} \\ 
	z_t^{-} \leadsto e_{23} \\ 
	y_{h_1}^{+}, \dots, y_{h_a}^{+} \leadsto e_{22} + e_{33} \\
        y_{h_{a+1}}^{-}, \dots, y_{h_{a+a'}}^{-} \leadsto e_{22} - e_{33} \\
        y_{r_1}^{+}, \dots, y_{r_d}^{+} \leadsto e_{11} + e_{44} \\
        y_{r_{d+1}}^{-}, \dots, y_{r_{d+d'}}^{-} \leadsto e_{11} - e_{44} \\
\end{cases},
$$
it follows that $z_{j_1}^{-}z_{t}^{-} y_{h_1}^{+}\cdots y_{h_a}^{+} y_{h_{a+1}}^{-}\cdots y_{h_{a+a'}}^{-} z_{j_2}^{-} y_{r_1}^{+}\cdots y_{r_b}^{+}y_{r_{b+1}}^{-}\cdots y_{r_{b+b'}}^{-}$ evaluates to $(-1)^{a'+b'+1}e_{14}$, and all the other polynomials evaluate to $0$. By varying the indices we obtain that the coefficients of the polynomials of type~\eqref{n1,n2,0,3:gen:B} are zero.

\medskip

Assume now $n_3 = 1$ and $n_4 = 2$. Since $z_1^- y_4 z_2^+ y_5 z_3^- \equiv 0$ and $z_1^- y_2 z_3^- y_4 z_5^+ - z_5^+ y_2 z_3^- y_4 z_1^- \equiv 0$, we just need to consider polynomials in which the $z^-$'s are ordered and the $z^+$ is never in the middle between them. Moreover, the $y$'s can be ordered according to Lemma \ref{identities ut4 n3 + n4 = 2}. Finally, taking into account also the identities of Lemma \ref{identities ut4 n3 + n4 = 3}, we just need to consider the following kind of generators: 

\begin{enumerate}[(A)]
	\item\label{n1,n2,1,2:gen:A} $z_{s_1}^{-}z_{s_2}^{-}z^{+} y_{i_1}^{+}\cdots y_{i_a}^{+}y_{i_{a+1}}^{-}\cdots y_{i_{a+a'}}^{-}$,
    \vspace{0.1 cm}
    
	\item\label{n1,n2,1,2:gen:B} $z^{+}z_{m_1}^{-}z_{m_2}^{-} y_{j_1}^{+}\cdots y_{j_b}^{+}y_{j_{b+1}}^{-}\cdots y_{j_{b+b'}}^{-}$,    
\vspace{0.1 cm}

    \item\label{n1,n2,1,2:gen:C} $z_{p_1}^{-}z_{p_2}^{-} y_{h_1}^{+}\cdots y_{h_c}^{+}y_{h_{c+1}}^{-}\cdots y_{h_{c+c'}}^{-} z^{+} y_{r_1}^{+}\cdots y_{r_d}^{+}y_{r_{d+1}}^{-}\cdots y_{r_{d+d'}}^{-}$,
\vspace{0.1 cm}

\item\label{n1,n2,1,2:gen:D} $z^{+} z_{q_1}^{-} y_{l_1}^{+}\cdots y_{l_e}^{+}y_{l_{e+1}}^{-}\cdots y_{l_{e+e'}}^{-} z_{q_2}^{-} y_{t_1}^{+}\cdots y_{t_f}^{+}y_{t_{f+1}}^{-}\cdots y_{t_{f+f'}}^{-}$,
\end{enumerate}
where $s_1 < s_2$, $m_1 < m_2$, $p_1 < p_2$, $q_1 < q_2$, $i_1< \cdots <i_a$, $i_{a+1}< \cdots <i_{a+a'}$, $j_1 < \cdots <j_b$, $j_{b+1} < \cdots <j_{b+b'}$, $h_1< \cdots <h_c$, $h_{c+1}< \cdots <h_{c+c'}$, $r_1< \cdots <r_d$, $r_{d+1}< \cdots <r_{d+d'}$, $l_1< \cdots <l_e$, $l_{e+1}< \cdots <l_{e+e'}$, $t_1< \cdots <t_f$, $t_{f+1}< \cdots <t_{f+f'}$, $c+c' \geq 1$ and $e+e' \geq 1$.

We next show that these polynomials are linearly independent modulo $\I^*(A)$.
Consider the evaluation
$$
\mathcal{E}_1:
\begin{cases}
   	z_{s_1}^{-} \leadsto e_{12} - e_{34} \\ 
        z_{s_2}^{-} \leadsto e_{23} \\
	z^{+} \leadsto e_{12} + e_{34} \\ 
	y_{i_1}^{+}, \dots, y_{i_a}^{+} \leadsto e_{11} + e_{44} \\
	y_{i_{a+1}}^{-}, \dots, y_{i_{a+a'}}^{-} \leadsto e_{11} - e_{44} \\ 
\end{cases}.
$$
It follows that $z_{s_1}^{-}z_{s_2}^{-}z^{+} y_{i_1}^{+}\cdots y_{i_a}^{+}y_{i_{a+1}}^{-}\cdots y_{i_{a+a'}}^{-}$ evaluates to $(-1)^{a'} e_{14}$, and all the other polynomials evaluate to $0$. By varying the indices we obtain that the coefficients of the polynomials of type~\eqref{n1,n2,1,2:gen:A} are zero.

In order to show that the polynomials of type~\eqref{n1,n2,1,2:gen:B} are zero it is sufficient to consider the evaluation
$$
\mathcal{E}_2:
\begin{cases}
   	z_{m_1}^{-} \leadsto e_{23} \\ 
        z_{m_2}^{-} \leadsto e_{12} - e_{34} \\
	z^{+} \leadsto e_{12} + e_{34} \\ 
	y_{j_1}^{+}, \dots, y_{j_b}^{+} \leadsto e_{11} + e_{44} \\
	y_{j_{b+1}}^{-}, \dots, y_{j_{b+b'}}^{-} \leadsto e_{11} - e_{44} \\
\end{cases}.
$$

Now make the evaluation
$$
\mathcal{E}_3:
\begin{cases}
   	z_{p_1}^{-} \leadsto e_{12} - e_{34} \\ 
        z_{p_2}^{-} \leadsto e_{23} \\
	z^{+} \leadsto e_{12} + e_{34} \\ 
	y_{h_1}^{+}, \dots, y_{h_c}^{+} \leadsto e_{22} + e_{33} \\
	y_{h_{c+1}}^{-}, \dots, y_{h_{c+c'}}^{-} \leadsto e_{22} - e_{33} \\   	
        y_{r_1}^{+}, \cdots, y_{r_d}^{+} \leadsto e_{11} + e_{44} \\ 
  	y_{r_{d+1}}^{-}, \dots, y_{r_{d+d'}}^{-} \leadsto e_{11} - e_{44} \\
\end{cases}.
$$
We get that $z_{p_1}^{-}z_{p_2}^{-} y_{h_1}^{+}\cdots y_{h_c}^{+}y_{h_{c+1}}^{-}\cdots y_{h_{c+c'}}^{-} z^{+} y_{r_1}^{+}\cdots y_{r_d}^{+}y_{r_{d+1}}^{-}\cdots y_{r_{d+d'}}^{-}$ evaluates to $(-1)^{c'+d'} e_{14}$, and all the other polynomials evaluate to $0$. By varying the indices we obtain that the coefficients of the polynomials of type~\eqref{n1,n2,1,2:gen:C} are zero.

As before, one can prove that the coefficients of the polynomials of type~\eqref{n1,n2,1,2:gen:D} are zero by using the evaluation
$$
\mathcal{E}_4:
\begin{cases}
   	z_{q_1}^{-} \leadsto e_{23} \\ 
        z_{q_2}^{-} \leadsto e_{12} - e_{34} \\
	z^{+} \leadsto e_{12} + e_{34} \\ 
	y_{l_1}^{+}, \dots, y_{l_e}^{+} \leadsto e_{22} + e_{33} \\
	y_{l_{e+1}}^{-}, \dots, y_{l_{e+e'}}^{-} \leadsto e_{22} - e_{33} \\
        y_{t_1}^{+}, \dots, y_{t_f}^{+} \leadsto e_{11} + e_{44} \\
        y_{t_{f+1}}^{-}, \dots, y_{t_{f+f'}}^{-} \leadsto e_{11} - e_{44} \\
\end{cases}.
$$

\medskip

Finally, let us consider the case $n_3 = 2$ and $n_4 = 1$. We just need to take into account identities~\eqref{id:24} and~\eqref{id:25} and proceed as in the previous case. We get the following kind of generators: 

\begin{enumerate}[(A)]
	\item\label{n1,n2,2,1:gen:A} $z_{s_1}^{+} z^{-} z_{s_2}^{+} y_{i_1}^{+}\cdots y_{i_a}^{+}y_{i_{a+1}}^{-}\cdots y_{i_{a+a'}}^{-}$, 
    \vspace{0.1 cm}
    
	\item\label{n1,n2,2,1:gen:B} $z_{m_1}^{+} z^{-}  y_{j_1}^{+}\cdots y_{j_b}^{+} y_{j_{b+1}}^{-}\cdots y_{j_{b+b'}}^{-} z_{m_2}^{+} y_{l_1}^{+}\cdots y_{l_c}^{+}y_{l_{c+1}}^{-}\cdots y_{l_{c+c'}}^{-}$,
\end{enumerate}
where $s_1 < s_2$, $m_1 < m_2$, $i_1< \cdots <i_a$, $i_{a+1}< \cdots <i_{a+a'}$, $j_1 < \cdots <j_b$, $j_{b+1} < \cdots <j_{b+b'}$, $l_1< \cdots <l_c$, $l_{c+1}< \cdots <l_{c+c'}$ and $b+b' \geq 1$.

We next show that these polynomials are linearly independent modulo $\I^*(A)$.
By making the evaluation
$$
\mathcal{E}_1:
\begin{cases}
   	z_{s_1}^{+}, z_{s_2}^{+} \leadsto e_{12} + e_{34} \\ 
        z^{-} \leadsto e_{23} \\ 
	y_{i_1}^{+}, \dots, y_{i_a}^{+} \leadsto e_{11} + e_{44} \\
	y_{i_{a+1}}^{-}, \dots, y_{i_{a+a'}}^{-} \leadsto e_{11} - e_{44} \\
\end{cases},
$$
we get that $z_{s_1}^{+} z^{-} z_{s_2}^{+} y_{i_1}^{+}\cdots y_{i_a}^{+} y_{i_{a+1}}^{-}\cdots y_{i_{a+a'}}^{-}$ evaluates to $(-1)^{a'} e_{14}$, and all the other polynomials evaluate to $0$. By varying the indices we obtain that the coefficients of the polynomials of type~\eqref{n1,n2,2,1:gen:A} are zero.

By making the evaluation
$$
\mathcal{E}_2:
\begin{cases}
   	z_{m_1}^{+}, z_{m_2}^{+} \leadsto e_{12} + e_{34} \\ 
        z^{-} \leadsto e_{23} \\ 
	y_{j_1}^{+}, \dots, y_{j_b}^{+} \leadsto e_{22} + e_{33} \\
	y_{j_{b+1}}^{-}, \dots, y_{j_{b+b'}}^{-} \leadsto e_{22} - e_{33} \\
        y_{l_1}^{+}, \dots, y_{l_c}^{+} \leadsto e_{11} + e_{44} \\
        y_{l_{c+1}}^{-}, \dots, y_{l_{c+c'}}^{-} \leadsto e_{11} - e_{44} \\
\end{cases},
$$
it follows that $z_{m_1}^{+} z^{-}  y_{j_1}^{+}\cdots y_{j_b}^{+} y_{j_{b+1}}^{-}\cdots y_{j_{b+b'}}^{-} z_{m_2}^{+} y_{l_1}^{+}\cdots y_{l_c}^{+}y_{l_{c+1}}^{-}\cdots y_{l_{c+c'}}^{-}  $ evaluates to $ (-1)^{b'+c'} e_{14}$, and all the other polynomials evaluate to $0$. By varying the indices we obtain that the coefficients of the polynomials of type~\eqref{n1,n2,2,1:gen:B} are zero.
\end{proof}

As a consequence of the explicit basis for the spaces $P_{n_1,\dots,n_4}$ modulo $\I^*(A)$ in the previous theorem, we have the next result.

\begin{Theorem}\label{codimensions of ut4}
Let $A$ be the $*$-algebra $UT_4(F)$ with elementary $\mathbb{Z}_2$-grading induced by $(0,1,0,1)$ and superinvolution $\bar{s}$. The $n$-th $*$-codimension of $A$ is given by
$$ 
c_n^*(A) = 2^{n}(n+2) - 2^{n-2}n(n-1) + 2^{2n-1} (n^2-2) + 2^{2n-5} n(n-1)(n-2).
$$
\end{Theorem}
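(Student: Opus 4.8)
The proof should be a bookkeeping exercise built on the codimension decomposition \eqref{cod}: it reduces $c_n^*(A)$ to the quantities $c_{n_1,n_2,n_3,n_4}(A)$, and the point of the previous theorem is that the monomials listed there (the generators of types (A)--(D) in Cases~1--4 of the proof of Theorem~\ref{ideal of ut4}) are not merely a spanning set but a \emph{basis} of $P_{n_1,n_2,n_3,n_4}/(P_{n_1,n_2,n_3,n_4}\cap \Id^*(A))$, since they were shown there to be linearly independent modulo $\Id^*(A)$. Hence $c_{n_1,n_2,n_3,n_4}(A)$ is exactly the number of listed monomials, and I would organize the whole computation according to the number $s=n_3+n_4$ of odd variables: because $z_1z_2z_3z_4\equiv 0$ forces $s\le 3$, the four values $s=0,1,2,3$ are precisely Cases~1--4.

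The mechanism that produces the powers of two in the closed form is the distribution of the even variables. In every basis monomial the even variables that lie outside a commutator are placed, in increasing order inside each block, among the blocks cut out by the odd variables and the commutator; choosing such a placement amounts to assigning each of the $n_1+n_2$ even variables independently to one of finitely many slots, so that the binomial theorem and its derivatives, $\sum_k\binom{m}{k}=2^m$, $\sum_k k\binom{m}{k}=m2^{m-1}$ and $\sum_k k(k-1)\binom{m}{k}=m(m-1)2^{m-2}$, evaluate the resulting inner sums. In the purely even case $s=0$ there are no odd variables to straddle, and the growth stays at the $2^n$-scale responsible for $2^n(n+2)-2^{n-2}n(n-1)$; as soon as $s\ge 1$, each even variable may sit on either side of an odd variable, which upgrades the growth to the $2^{2n}$-scale of $2^{2n-1}(n^2-2)$ and $2^{2n-5}n(n-1)(n-2)$.

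With $(n_3,n_4)$ fixed, the residual task is to enumerate the commutator-type generators. For $s=0$ these are the generators of types~(B),~(C),~(D): I would count them by choosing the nonempty set of even variables that enter the single commutator together with the position of its leading entry, subject to the order constraints $k>l_1<\cdots<l_c$, and then ordering the remaining even variables in the prefix; the $z^+$- and $z^-$-terminated commutators occurring for $s=1$ are counted the same way. For $s=2$ and $s=3$ no commutators survive and the count collapses to distributing the even variables among the two (respectively three) blocks delimited by the odd variables, subject to the nonemptiness conditions ($d+d'\ne 0$, $a+a'\ge 1$, and so on) recorded in Theorem~\ref{ideal of ut4} and to the degeneracies that force $(n_3,n_4)=(2,0)$ and $(3,0)$ to contribute nothing (as noted through \eqref{id:24} in that proof).

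The final step is to substitute the resulting counts into \eqref{cod}, sum over the even/odd splits and over $(n_3,n_4)$ within each of the four groups $s=0,1,2,3$, and add the groups together. I expect the genuine obstacle to be exactly this last consolidation: each group separately yields several $2^n$- and $2^{2n}$-scale contributions with polynomial-in-$n$ coefficients, and only after combining all four and cancelling do they collapse into the compact four-term expression of the statement. As a guard against sign and off-by-one errors in the combinatorics, I would first verify the formula by hand for $n=1$ and $n=2$, where it returns $c_1^*(A)=4$ and $c_2^*(A)=30$; these already exercise every delicate point the general count must respect, namely the nonvanishing commutators such as $[y_1^+,y_2^+]$, the vanishing product $z_1^+z_2^+$, and the two independent orderings of a pair of odd skew variables.
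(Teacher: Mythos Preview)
Your outline is essentially the paper's own approach: count, for each $(n_1,n_2,n_3,n_4)$, the basis monomials exhibited in the proof of Theorem~\ref{ideal of ut4}, organize the computation by $s=n_3+n_4\in\{0,1,2,3\}$, evaluate the resulting sums with standard binomial identities, and add. The paper carries this out by writing each case as an explicit nested sum (for instance $\mathcal{S}_1(n)=\sum_{l}\sum_{a}\binom{n}{a}\binom{n-a}{n-a-l}\sum_{k}\binom{l}{k}(k-1)$ for the type~(B) commutators), collapsing them to closed forms $\mathcal{S}(n),\mathcal{U}(n),\mathcal{V}(n),\mathcal{Z}(n)$, and summing.

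One point in your heuristic is inaccurate and worth correcting before you carry out the bookkeeping: the case $s=0$ does \emph{not} stay at the $2^n$-scale. In fact $\mathcal{S}(n)=2^{n+1}+2^{2n-1}(n-2)$, so the purely even case already contributes to the $2^{2n}$-term $2^{2n-1}(n^2-2)$ in the final formula. The mechanism you missed is that in the commutator types (B)--(D) each of the $n$ even variables independently decides \emph{both} its sign ($y^+$ vs.\ $y^-$) \emph{and} whether it sits inside or outside the commutator, which already produces a $4^n$-scale count without any odd variable present. Your attribution of the four displayed summands to the four values of $s$ is therefore off; the clean four-term form only emerges after cross-cancellation among the cases, exactly as you anticipated in your last paragraph. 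Your sanity checks $c_1^*(A)=4$ and $c_2^*(A)=30$ are correct.
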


\begin{proof}

Let us consider the four cases appearing in the proof of Theorem \ref{ideal of ut4}. We will thus take into account the subspaces $P_{n_1,\dots,n_4}$. For each one of them, we count the total number of generators for all $n_1,n_2,n_3,n_4$, where $n_3$ and $n_4$ are subject to the conditions defining each case.

    \medskip

    \noindent \textbf{Case 1:} $n_3 = n_4 = 0$. 

    \smallskip
    

It is immediate to see that the generators of type \eqref{n1,n2,0,0:gen:A} are exactly
$$ 
\sum_{a=0}^{n} \binom{n}{a} = 2^n.
$$ 

Now let us focus our attention on the generators of type \eqref{n1,n2,0,0:gen:B}: 
$$
y_{i_1}^{+} \dots y_{i_a}^{+}y_{j_1}^{-}\dots y_{j_b}^{-}[y_{\kappa}^{+}, y_{l_1}^{+}, \dots, y_{l_c}^{+}, y_{m_1}^{-}, \dots, y_{m_d}^{-}],
$$ 
where $ \kappa >l_1<\dots < l_c, \ m_1<\dots <m_d$. 

Fix $l$ to be the number of variables inside the commutator and $k := 1 + c$ the number of symmetric even variables inside the commutator. 

We have $\binom{n}{a}$ choices for the indices of the symmetric even variables outside the commutator, and then $\binom{n-a}{n-a-l}$ choices for the indices of the skew even variables outside the commutator. We multiply these by the number of choices inside the commutator, that is: for each possible choice for the number $k$ of symmetric even variables we can choose the indices in $\binom{l}{k}$ ways, and we can order the variables inside in $k-1$ ways.

In conclusion the generators of type \eqref{n1,n2,0,0:gen:B} are
$$ 
\mathcal{S}_1(n) := \sum_{l=2}^{n} \left( \sum_{a=0}^{n-l} \binom{n}{a} \binom{n-a}{n-a-l} \cdot \sum_{k=2}^{l} \binom{l}{k}(k-1) \right). 
$$
    
The number of generators of type \eqref{n1,n2,0,0:gen:C} is exactly the same. With a similar approach one easily get that there are
$$ 
\mathcal{S}_2(n) := \sum_{l=2}^{n} \left( \sum_{a=0}^{n-l} \binom{n}{a} \binom{n-a}{n-a-l} \cdot \sum_{k=1}^{l-1} \binom{l}{k} \right) 
$$
generators of type \eqref{n1,n2,0,0:gen:D}.

Using standard combinatorial arguments, it is easily seen that
    \begin{align*}
        \mathcal{S}_1(n) & = (n-4)2^{2n-2} + 3^{n}, \\
        \mathcal{S}_2(n) & = 2^{2n} + 2^{n} - 2 \cdot 3^{n}.
    \end{align*}
    
Therefore, the number $\mathcal{S}(n)$ of generators in Case 1 is 
\begin{equation*}
\begin{split}
\mathcal{S}(n) &= 2^{n} + 2 \mathcal{S}_1(n) + \mathcal{S}_2(n) \\
&= 2^{n+1}\left(1+2^{n-2}(n-2)\right).
\end{split}
\end{equation*}

    \noindent \textbf{Case 2:} $n_3 + n_4 = 1$.  

    \smallskip

Let us deal first with the generators of type \eqref{n1,n2,1,0:gen:D}, ~\eqref{n1,n2,1,0:gen:A}, \eqref{n1,n2,1,0:gen:B} and \eqref{n1,n2,1,0:gen:C} of the sub-case $(n_1, n_2, 1, 0)$. The other sub-case, namely $(n_1, n_2, 0, 1)$, is completely analogous.

There are 
    $$
    \mathcal{U}_1(n) := n \sum_{a=0}^{n-1} \binom{n-1}{a} \sum_{b=0}^{n-1-a} \binom{n-1-a}{b} \sum_{c=0}^{n-1-a-b} \binom{n-1-a-b}{c} 
    $$
    generators of type~\eqref{n1,n2,1,0:gen:D}. Moreover, the number of generators of types~\eqref{n1,n2,1,0:gen:A}, \eqref{n1,n2,1,0:gen:B} and~\eqref{n1,n2,1,0:gen:C} can be computed similarly to Case 1, more precisely they are
    $$ \mathcal{U}_2(n) := n(2 \cdot \mathcal{S}_1(n-1) + \mathcal{S}_2(n-1)).$$
    
    Again by standard combinatorial arguments, it follows that
    \begin{align*}
        \mathcal{U}_1(n) & = n 4^{n-1}, \\
        \mathcal{U}_2(n) & = n(2^{2n-3}(n-3) + 2^{n-1}).
    \end{align*}
    
    In conclusion, the number $\mathcal{U}(n)$ of generators in Case 2 is
    $$\mathcal{U}(n) = n2^{n} \left(1+(n-1)2^{n-2}\right).$$

    \noindent \textbf{Case 3.} $n_3 + n_4 = 2$.   

    \smallskip

    Consider the generators of type~\eqref{n1,n2,1,1:gen:A} and~\eqref{n1,n2,1,1:gen:B} of the sub-case $(n_1, n_2, 1, 1)$.

    The number of generators of type~\eqref{n1,n2,1,1:gen:A} is 
    $$ \mathcal{V}_1(n) := n (n-1) \sum_{a=0}^{n-2} \binom{n-2}{a} \sum_{b=0}^{n-2-a} \binom{n-2-a}{b} \sum_{c=0}^{n-2-a-b} \binom{n-2-a-b}{c}.$$
    
    Observe that $\mathcal{V}_1(n) = n \cdot \mathcal{U}_1(n-1)$. Hence the generators of type~\eqref{n1,n2,0,2:gen:A} are
    $$ \mathcal{V}_1(n) = n(n-1)2^{2n-4}. $$
    
    We double this number for the generators of type~\eqref{n1,n2,1,1:gen:B}, which can be obtained with the same computation. 
    
    Now, consider the sub-case $(n_1, n_2, 0, 2)$. The number of generators of type~\eqref{n1,n2,0,2:gen:A} is exactly as above. The generators of type~\eqref{n1,n2,0,2:gen:B} are subject to the condition $d+d'\neq 0$, therefore they are 
    $$ \mathcal{V}_1(n) - n(n-1)\sum_{c=0}^{n-2}\binom{n-2}{c} =\mathcal{V}_1(n) - n(n-1)2^{n-2}.$$
    In total, the number $\mathcal{V}(n)$ of generators in Case 3 is
    $$ \mathcal{V}(n) = 4\mathcal{V}_1(n) - n(n-1)2^{n-2} = n(n-1)2^{n-2}(2^{n}-1).$$

    \noindent \textbf{Case 4:} $n_3 + n_4 = 3$.   

    \smallskip
    
    Consider the generators of type~\eqref{n1,n2,0,3:gen:A} and~\eqref{n1,n2,0,3:gen:B} in the sub-case $(n_1, n_2, 0, 3)$.

    Note that, when $a+a'=0$, the generators of type~\eqref{n1,n2,0,3:gen:B} fall into type~\eqref{n1,n2,0,3:gen:A}, so we can treat them all together. They are
    $$ 
    \mathcal{Z}_1(n) := n \binom{n-1}{2} \sum_{a=0}^{n-3} \binom{n-3}{a} \sum_{b=0}^{n-3-a} \binom{n-3-a}{b} \sum_{c=0}^{n-3-a-b} \binom{n-3-a-b}{c}.
    $$
    
    Notice that
    $$ \mathcal{Z}_1(n) = \frac12 n \mathcal{V}_1(n-1) = n (n-1)(n-2)2^{2n-7}.$$

    The sub-case $(n_1, n_2, 2, 1)$ is completely analogue. 
    
    Finally, let us deal with the sub-case $(n_1, n_2, 1, 2)$, where we have to consider the generators of type~\eqref{n1,n2,1,2:gen:A}, \eqref{n1,n2,1,2:gen:B}, \eqref{n1,n2,1,2:gen:C} and~\eqref{n1,n2,1,2:gen:D}. We pair the generators of type~\eqref{n1,n2,1,2:gen:A} with those of type~\eqref{n1,n2,1,2:gen:C}, and the generators of type~\eqref{n1,n2,1,2:gen:B} with those of type~\eqref{n1,n2,1,2:gen:D}. We note that each pair can be counted as above. 

    Therefore, in total, the number $\mathcal{Z}(n)$ of generators in Case 4 is
    $$ \mathcal{Z}(n) = 4\mathcal{Z}_1(n) = n (n-1)(n-2)2^{2n-5}.$$

    Finally, summing up all the numbers, we obtain that
    \begin{align*}
    c_n^*(A) &= \mathcal{S}(n) + \mathcal{U}(n) + \mathcal{V}(n) + \mathcal{Z}(n) \\
    &= 2^{n}(n+2) - 2^{n-2}n(n-1) + 2^{2n-1} (n^2-2) + 2^{2n-5} n(n-1)(n-2).
    \end{align*}
\end{proof}

By the above theorem, and also according to \cite{Ioppolo2018}, it immediately follows that
\[
\exp^\ast(A) = \lim_{n \to +\infty} \sqrt[n]{ c_n^{\ast}(A)} = 4.
\]

We would like to point out that the remaining cases, namely, \( UT_4(F) \) with the elementary grading \( (0,1,0,1) \) and the super-reflection superinvolution, as well as \( UT_4(F) \) with the other elementary gradings of $*$-type and the corresponding superinvolutions, can be treated in a similar fashion. Consequently, it is certainly possible to determine the generators of the identities and the codimensions in these cases. However, due to the considerable computational complexity involved, already evident in the case treated here, we have chosen not to pursue these computations.

\section{\bf Images of polynomials} 

Let $A$ be a $*$-algebra. Every polynomial in $F\langle Y\cup Z,* \rangle$ induces a function on $A$ as follows: if $f=f(y_1^+,\dots,y_n^+,y_1^-\dots,y_m^-,z_1^+, \ldots,z_t^+,z_1^-,\dots,z_s^-)$, then $\varphi_f$ is the map
\[
\varphi_f \colon \left(A_0^+ \right)^n\times \left(A_0^- \right)^m\times \left(A_1^+ \right)^t \times \left(A_1^- \right)^s \longrightarrow  A
\]
which sends the element ${\bf a} = (a_1^+,\dots,a_n^+,a_1^-\dots,a_m^-,b_1^+,\dots,b_t^+,\dots,b_1^-,\dots,b_s^-)$ to the evaluation of $f$ on this tuple, i.e., 
\[
\varphi_f({\bf a}) =  f(a_1^+,\dots,a_n^+,a_1^-\dots,a_m^-,b_1^+,\dots,b_t^+,\dots,b_1^-,\dots,b_s^-).
\]

\begin{Definition}
The image of a polynomial $f\in F\langle Y\cup Z,* \rangle$ on some $*$-algebra $A$ is defined as the image of the function $\varphi_f$.
\end{Definition}

In light of the L'vov-Kaplansky conjecture we are interested in studying the images of polynomials which induces multilinear functions on $*$-algebras. Recall from the second section that the space of multilinear polynomials in $F\langle Y\cup Z,* \rangle$ was defined as
\[
P_n^*=\mbox{span}_F \{w_{\sigma(1)}\cdots w_{\sigma(n)} \ | \ \sigma \in S_n, \  w_i \in \{y_i^+,y_i^-,z_i^+,z_i^-\}, \ i=1,\dots,n\}.
\]

Notice that in $P_n^*$ there exist polynomials, such as $y_1^+y_2^++y_1^+y_2^-$, which, in general, do not induce multilinear functions on a $*$-algebra. For this reason, we will restrict ourselves to study images of multilinear polynomials from the following subspaces of $P_n^*$. Take $m\in\mathbb{N}$ and let $\Lambda_m=\{w_1,\dots,w_m\}$ be a subset of $Y^+ \cup Y^- \cup Z^+ \cup Z^-$. We set 
\[
P_{\Lambda_m}=\mbox{span}_F \left \{w_{\sigma(1)} \cdots w_{\sigma(m)} \ | \ \sigma\in S_m, \ w_i\in \Lambda_m, \ i=1,\dots,m \right \}.
\]
In others words, the elements from $P_{\Lambda_m}$ are linear combinations of multilinear words in $m$ fixed letters from the alphabet $Y\cup Z$.

\medskip
The main goal of this section is to give a complete classification of the images of the polynomials in $P_{\Lambda_m}$ on the $*$-algebras of upper triangular matrices $UT_2(F)$ and $UT_3(F)$, endowed with any possible superinvolution. According to Theorem \ref{superinvolutions on UTn}, we assume that our algebras are endowed with an elementary $\mathbb{Z}_2$-grading of $*$-type, i.e., induced by a sequence $(g_1,\dots,g_n)$ such that $g_1+g_n=g_2+g_{n-1}=\cdots=g_n+g_1$. 

\medskip

Let us start by considering the algebra $UT_2(F)$. Up to isomorphism, there are just two elementary gradings of $*$-type on such an algebra, namely $(0,0)$ and $(0,1)$. Moreover, since the odd part in both gradings is nilpotent of nilpotency index $2$, it immediately follows that the superinvolutions on $UT_2(F)$ are actually graded involutions. From \cite[Theorem 3.1]{fagundes}, we get the following result.

\begin{Theorem}
Let $A$ be the algebra $UT_2(F)$ with elementary grading $(0,0)$ or $(0,1)$ and endowed with any possible superinvolution. Then the images of multilinear $*$-polynomials on $A$ is always a homogeneous vector space.     
\end{Theorem}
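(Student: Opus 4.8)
The plan is to reduce the statement to the graded-involution setting already treated in \cite{fagundes}, the homogeneity being essentially free of charge. The first step is to record that, for both admissible gradings $(0,0)$ and $(0,1)$ on $UT_2(F)$, the odd component satisfies $A_1^2 = \{0\}$: in the trivial grading $A_1 = 0$, while in the $(0,1)$ grading $A_1 = \mathrm{span}_F\{e_{12}\}$ and $e_{12}^2 = 0$. By the remark following the definition of superinvolution in Section \ref{preliminaries}, whenever $A_1^2 = \{0\}$ a superinvolution on $A$ is in fact a graded involution; indeed the two defining sign conventions differ only on products of two odd elements, all of which vanish. Hence each superinvolution on $UT_2(F)$ (i.e. $\bar{\circ}$ or $\bar{s}$, according to Theorem \ref{superinvolutions on UTn}) coincides, as a linear map, with a graded involution, and the symmetric/skew homogeneous decomposition (\ref{symmetric and skew decomposition}) is the same in both readings.

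Next I would observe that the homogeneity assertion is automatic. Fix $\Lambda_m = \{w_1, \ldots, w_m\} \subseteq Y^+ \cup Y^- \cup Z^+ \cup Z^-$ and set $\delta \equiv \sum_{i=1}^m |w_i| \pmod 2$, where $|y^{\pm}| = 0$ and $|z^{\pm}| = 1$. Every monomial $w_{\sigma(1)} \cdots w_{\sigma(m)}$ spanning $P_{\Lambda_m}$ is a product of the same $m$ homogeneous symmetric/skew elements, hence has total $\mathbb{Z}_2$-degree $\delta$ independently of $\sigma$. Consequently every evaluation $\varphi_f(\mathbf{a})$ of a polynomial $f \in P_{\Lambda_m}$ lands in the single homogeneous component $A_\delta$, so $\mathrm{Im}(\varphi_f) \subseteq A_\delta$ is homogeneous. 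It remains only to prove that this image is closed under sums and scalar multiples, i.e. that it is a linear subspace.

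For the vector-space part I would invoke \cite[Theorem 3.1]{fagundes} directly. By the reduction of the first paragraph, the $\ast$-algebra $UT_2(F)$ with either superinvolution is, for the purpose of evaluating polynomials from $P_{\Lambda_m}$, precisely one of the graded-involution algebras handled there: the spaces $P_{\Lambda_m}$, the induced functions $\varphi_f$, and hence the images $\mathrm{Im}(\varphi_f)$ coincide verbatim with the objects considered in \cite{fagundes}. That theorem guarantees these images are vector spaces, which together with the homogeneity above yields the claim. The only genuine point requiring care—and the step I would verify explicitly—is the bookkeeping that matches each of the two superinvolutions and each of the two gradings to a graded involution genuinely within the scope of \cite[Theorem 3.1]{fagundes}; once the conventions are aligned, no further computation is needed.
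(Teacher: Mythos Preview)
Your proposal is correct and follows essentially the same approach as the paper: the paper simply observes that the odd part of $UT_2(F)$ under either grading squares to zero, so every superinvolution is in fact a graded involution, and then cites \cite[Theorem 3.1]{fagundes} for the result. Your write-up adds the easy homogeneity observation explicitly and is more detailed about the bookkeeping, but the argument is the same.
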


Now let us focus our attention on $3\times 3$ upper triangular matrices. In  this case, there are again just $2$ elementary gradings of $*$-type, namely: 
\[
(0,0,0) \ \text{and} \ (0,1,0). 
\]

The trivial grading case also follows from \cite{fagundes}, since in this situation a superinvolution is just an involution. Here we actually have that the image of a multilinear polynomial is not always a vector space. Indeed, the statement of \cite[Proposition 4.1]{fagundes}, can be written, in our language, as follows.

\begin{Theorem}
Let $A$ be the algebra $UT_n(F)$, $n \ge 3$, with trivial grading $(0,\ldots,0)$ and endowed with the reflection (super)involution $\circ$. Then the image of the multilinear $*$-polynomial $y_1^- y_2^-$ on $A$ is not a vector space.   
\end{Theorem}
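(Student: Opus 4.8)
The plan is to exhibit two matrices in the image whose sum escapes it. First I would record that, since the grading is trivial, the superinvolution $\bar{\circ}$ is just the ordinary reflection involution $\circ$, and that $y_1^- y_2^-$ evaluates precisely to the products $XY$ with $X,Y$ ranging over the skew space $A^- = \{X \in UT_n(F) : X^\circ = -X\}$; concretely $X^\circ = -X$ forces $X_{ij} = -X_{n+1-j,\,n+1-i}$, so in particular the antidiagonal entries vanish. Setting $u = e_{11} - e_{nn} \in A^-$ gives $u^2 = e_{11} + e_{nn} =: P$ in the image, and setting $X_2 = e_{12} - e_{n-1,n}$, $Y_2 = e_{2n} - e_{1,n-1}$ (both in $A^-$) gives $X_2 Y_2 = e_{1n} =: Q$ in the image. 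The target is then to prove that $M := P + Q = e_{11} + e_{nn} + e_{1n}$ lies outside the image, which shows the image is not closed under addition. I would stress that this construction of $P$ and $Q$ is uniform in $n \ge 3$, so no separate small-case analysis is needed.

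So suppose, for contradiction, that $M = XY$ with $X,Y \in A^-$. The key structural input is that $\circ$ is the adjoint with respect to the nondegenerate symmetric bilinear form given by the flip matrix $K = \sum_i e_{i,\,n+1-i}$ (indeed $X^\circ = K X^T K$), so that skew elements are exactly the $K$-skew-adjoint operators. Since $M$ is a symmetric element ($M^\circ = M$) while $(XY)^\circ = Y^\circ X^\circ = YX$, we obtain $XY = YX$; hence $X$ and $Y$ commute with $M$ and preserve its generalized eigenspaces. A direct computation shows that the eigenvalue $1$ of $M$ has algebraic multiplicity $2$ but geometric multiplicity $1$, so on the two-dimensional generalized eigenspace $V_1$ the operator $M$ is a single non-scalar $2 \times 2$ Jordan block. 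Because $M$ is $K$-self-adjoint, $V_1$ is $K$-orthogonal to the generalized $0$-eigenspace, and therefore $K$ restricts to a nondegenerate symmetric form on $V_1$.

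The crux, and the step I expect to be the main obstacle to phrase cleanly, is the rigidity on $V_1$: the restrictions $X|_{V_1}$ and $Y|_{V_1}$ are skew-adjoint for $K|_{V_1}$, but the space of skew-adjoint endomorphisms of a two-dimensional space carrying a nondegenerate symmetric form is one-dimensional (the Lie algebra $\mathfrak{so}$ of a rank-two form). Thus $X|_{V_1}$ and $Y|_{V_1}$ are both scalar multiples of a single skew-adjoint operator $N_0$; since $N_0$ is traceless it satisfies $N_0^2 = -\det(N_0)\, I$ by Cayley--Hamilton, so $M|_{V_1} = X|_{V_1}\, Y|_{V_1}$ is a scalar operator. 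This contradicts the fact that $M|_{V_1}$ is a nontrivial Jordan block. Hence $M$ is not in the image, and the image of $y_1^- y_2^-$ fails to be a vector space. I would close by noting that the whole argument is insensitive to the ambient size $n$, which is why it settles all cases $n \ge 3$ at once.
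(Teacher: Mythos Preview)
Your argument is correct. The two ingredients you need to justify are (i) that $X$ and $Y$ preserve $V_1$, and (ii) that $K$ is nondegenerate on $V_1$; both follow, as you indicate, from $XM=MX$ (obtained via $XY=YX=M$) and from the $K$-orthogonality of distinct generalized eigenspaces of the $K$-self-adjoint operator $M$. Once $X|_{V_1},Y|_{V_1}\in\mathfrak{so}(K|_{V_1})$, the one-dimensionality of this Lie algebra in rank two forces $(XY)|_{V_1}$ to be scalar, contradicting the Jordan block $M|_{V_1}$. One cosmetic remark: you do not even need Cayley--Hamilton here, since in the basis $\{e_1,e_n\}$ the Gram matrix of $K|_{V_1}$ is $\left(\begin{smallmatrix}0&1\\1&0\end{smallmatrix}\right)$ and the skew-adjoint matrices are exactly the scalar multiples of $\left(\begin{smallmatrix}1&0\\0&-1\end{smallmatrix}\right)$, whose square is the identity.

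As for comparison: the paper does not actually prove this theorem. It is stated as a translation of \cite[Proposition~4.1]{fagundes} into the present language, with no argument reproduced. So within this paper there is nothing to compare your approach to. What you have written is a complete, self-contained proof where the paper offers only a citation; the invariant-subspace-plus-$\mathfrak{so}_2$ idea is an elegant way to handle all $n\geq 3$ uniformly without any entrywise bookkeeping.
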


Let us finally investigate the images of multilinear polynomials on the algebra $UT_3(F)$ with elementary grading $(0,1,0)$ and endowed with the only superinvolution that can be defined in this case, namely the super-reflection $\bar{\circ}$ given by:
\[
\begin{pmatrix}
    a&b&c\\
    0&d&e\\
    0&0&f
\end{pmatrix}^{\bar{\circ}}=\begin{pmatrix}
    f&e&-c\\
    0&d&b\\
    0&0&a
\end{pmatrix}.
\]

It is easy to see that such a $*$-algebra, can be decomposed in the following way: $UT_3(F) = A_0^+\oplus A_0^-\oplus A_1^+\oplus A_1^-$, where
\begin{equation*}
\begin{split}
A_0^+ &= \mbox{span}_F \{e_{11}+e_{33},e_{22}\}, \\ 
A_0^- &= \mbox{span}_F \{e_{11}-e_{33},e_{13}\}, \\
A_1^+ &= \mbox{span}_F \{e_{12}+e_{23}\}, \\
A_1^- &= \mbox{span}_F \{e_{12}-e_{23}\}.    
\end{split}
\end{equation*}

Let us start by considering the case in which $f\in P_{\Lambda_m}$ is a multilinear polynomial on even variables only. 

Consider a set of commutative variables 
\[
\Xi= \left \{\xi_{ij}^{(k)} \ | \ i,j=1,2,3, \ k = 1,2,\dots \right \}
\]
and the free associative and commutative algebra $F[\Xi]$. Moreover we will also consider the following evaluations of the even variables
\[
\overline{y_k^+}=\begin{pmatrix}
    \xi_{11}^{(k)}&0&0\\
    0&\xi_{22}^{(k)}&0\\
    0&0&\xi_{11}^{(k)}
\end{pmatrix} 
\quad \text{and} \quad 
\overline{y_k^-}=\begin{pmatrix}
    \xi_{11}^{(k)}&0&\xi_{13}^{(k)}\\
    0&0&0\\
    0&0&-\xi_{11}^{(k)}
\end{pmatrix}.
\] 

We need a few technical lemmas. In what follows we use the hat \ $\widehat{ }$ \ over a variable to indicate that it is omitted from the expression.

\begin{Lemma}\label{productsym}
The product $y_1^+\cdots y_l^+$ is evaluated to
    \[ 
    \begin{pmatrix} \displaystyle
        \prod_{i=1}^l \xi_{11}^{(i)}&0&0\\
        0& \displaystyle \prod_{i=1}^l \xi_{22}^{(i)}&0\\
        0&0& \displaystyle \prod_{i=1}^l \xi_{11}^{(i)}
    \end{pmatrix}.
    \]
\end{Lemma}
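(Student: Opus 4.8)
The plan is to verify the claimed evaluation of the product $y_1^+ \cdots y_l^+$ by a direct induction on $l$, exploiting the fact that each $\overline{y_k^+}$ is a diagonal matrix. Recall that by definition
\[
\overline{y_k^+} = \xi_{11}^{(k)}(e_{11}+e_{33}) + \xi_{22}^{(k)} e_{22},
\]
so every even symmetric variable is evaluated to a diagonal matrix whose $(1,1)$ and $(3,3)$ entries coincide and equal $\xi_{11}^{(k)}$, while the $(2,2)$ entry is $\xi_{22}^{(k)}$. The essential structural observation is that diagonal matrices multiply entrywise, hence the product of the $\overline{y_k^+}$ is again diagonal and each diagonal entry is the product of the corresponding entries of the factors.

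For the induction I would take the base case $l=1$, which is immediate from the displayed evaluation of $\overline{y_1^+}$. For the inductive step, assume the product $\overline{y_1^+}\cdots \overline{y_{l-1}^+}$ equals the diagonal matrix with entries $\prod_{i=1}^{l-1}\xi_{11}^{(i)}$, $\prod_{i=1}^{l-1}\xi_{22}^{(i)}$, $\prod_{i=1}^{l-1}\xi_{11}^{(i)}$ on the diagonal. Multiplying on the right by $\overline{y_l^+}$ and using that the product of two diagonal matrices is the diagonal matrix of entrywise products, the new $(1,1)$ and $(3,3)$ entries become $\left(\prod_{i=1}^{l-1}\xi_{11}^{(i)}\right)\xi_{11}^{(l)} = \prod_{i=1}^{l}\xi_{11}^{(i)}$, and the $(2,2)$ entry becomes $\prod_{i=1}^{l}\xi_{22}^{(i)}$, which is exactly the claimed form. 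Since $F[\Xi]$ is commutative, the order of the scalar factors is irrelevant, so the products are genuinely over the index set without ambiguity.

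There is essentially no obstacle here: the whole content is that diagonal matrices form a commutative subalgebra isomorphic to a direct product of copies of $F[\Xi]$, and the statement is just the coordinatewise description of a product in that subalgebra. I would state the lemma's proof in one or two lines, noting only that the evaluations $\overline{y_k^+}$ are diagonal and invoking the entrywise multiplication rule together with the commutativity of $F[\Xi]$, rather than writing out the induction in full detail.
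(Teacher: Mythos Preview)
Your proposal is correct and matches the paper's approach: the paper simply states that the proof is straightforward and omits it, and your induction on $l$ using that each $\overline{y_k^+}$ is diagonal is exactly the kind of verification the authors have in mind.
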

\begin{proof}
    The proof is straightforward and hence omitted.
\end{proof}

\begin{Lemma}\label{commutatorsskew}
If $m-l\geq 2$, then $\left [y_i^-,y_{l+1}^-,\dots,\widehat{y_i^-},\dots,y_m^- \right]$ is evaluated to
    \[
        (-1)^{m-l}2^{m-l-1} \left( \xi_{11}^{(i)}\xi_{13}^{(l+1)}-\xi_{13}^{(i)}\xi_{11}^{(l+1)} \right) \xi_{11}^{(l+2)}\cdots \widehat{\xi_{11}^{(i)}}\cdots \xi_{11}^{(m)}e_{13}.
    \]
\end{Lemma}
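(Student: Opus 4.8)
The plan is to reduce the evaluation of the whole left-normed commutator to two elementary multiplication rules for the matrices $\overline{y_k^-}$ and then run a short induction on the length $m-l$. Writing $\overline{y_k^-}=\xi_{11}^{(k)}(e_{11}-e_{33})+\xi_{13}^{(k)}e_{13}$, a direct multiplication of two such matrices (using $e_{13}^2=0$, $(e_{11}-e_{33})e_{13}=e_{13}$ and $e_{13}(e_{11}-e_{33})=-e_{13}$) yields the first key fact, namely that the shortest bracket evaluates to
\[
[\overline{y_i^-},\overline{y_{l+1}^-}]=2\left(\xi_{11}^{(i)}\xi_{13}^{(l+1)}-\xi_{13}^{(i)}\xi_{11}^{(l+1)}\right)e_{13}.
\]
This already produces the matrix unit $e_{13}$ and the exact bilinear factor appearing in the statement, together with one factor of $2$. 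It is this step that fixes $e_{13}$ as the ambient direction for everything that follows, so I would isolate it as the base case of the induction.

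Next I would record the second key fact: once the partial commutator is a scalar multiple of $e_{13}$, each further bracketing collapses to a scalar. Since $e_{13}\,\overline{y_k^-}=-\xi_{11}^{(k)}e_{13}$ and $\overline{y_k^-}\,e_{13}=\xi_{11}^{(k)}e_{13}$, one gets
\[
[c\,e_{13},\,\overline{y_k^-}]=c\,e_{13}\,\overline{y_k^-}-\overline{y_k^-}\,c\,e_{13}=-2c\,\xi_{11}^{(k)}e_{13}.
\]
Thus bracketing with any remaining skew variable $\overline{y_k^-}$ multiplies the coefficient of $e_{13}$ by $-2\xi_{11}^{(k)}$ and leaves the $e_{13}$-direction unchanged. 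This is the inductive step, and it is where the whole computation becomes a bookkeeping of scalars.

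Finally I would assemble the count. Because in the intended range one has $i\geq l+2$, the variable $y_{l+1}^-$ is never omitted and always occupies the second slot of the bracket, so the base case is exactly $[\overline{y_i^-},\overline{y_{l+1}^-}]$; the remaining entries are precisely $\{\overline{y_{l+2}^-},\dots,\overline{y_m^-}\}\setminus\{\overline{y_i^-}\}$, a set of $m-l-2$ matrices. Applying the second fact once for each of them multiplies the base coefficient by $\prod_{k}(-2\xi_{11}^{(k)})$, giving a total numerical factor $2\cdot(-2)^{m-l-2}=(-1)^{m-l-2}2^{m-l-1}=(-1)^{m-l}2^{m-l-1}$ and the scalar product $\xi_{11}^{(l+2)}\cdots\widehat{\xi_{11}^{(i)}}\cdots\xi_{11}^{(m)}$, which is exactly the claimed expression.

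There is no deep obstacle here; the content is an explicit $3\times 3$ computation. The only place demanding care is the sign and power-of-two bookkeeping: one must verify that the $m-l-2$ factors of $-2$ from the inductive step combine with the single factor $2$ from the base case to give $(-1)^{m-l}2^{m-l-1}$ (using $(-1)^{m-l-2}=(-1)^{m-l}$), and one must confirm that the omitted index in the scalar product is $i$ and that $y_{l+1}^-$ is correctly excluded from the trailing product because it is consumed in the base case.
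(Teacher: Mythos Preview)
Your proposal is correct and follows essentially the same approach as the paper: both argue by induction on $m-l$, establish the base case $[\overline{y_i^-},\overline{y_{l+1}^-}]=2(\xi_{11}^{(i)}\xi_{13}^{(l+1)}-\xi_{13}^{(i)}\xi_{11}^{(l+1)})e_{13}$ by a direct computation, and then observe that each further bracketing with $\overline{y_k^-}$ multiplies the $e_{13}$-coefficient by $-2\xi_{11}^{(k)}$. Your write-up is in fact a bit more explicit about why the trailing scalar product omits precisely the index $i$ (via the remark that $i\geq l+2$ forces $y_{l+1}^-$ into the second slot), a point the paper leaves implicit.
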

\begin{proof}
The proof is by induction on $m-l$. In case $m-l=2$, it is clear that
    \[
    \left[y_{l+2}^-,y_{l+1}^- \right]=2 \left( \xi_{11}^{(l+2)}\xi_{13}^{(l+1)}-\xi_{13}^{(l+2)}\xi_{11}^{(l+1)} \right) e_{13}
    \]
    which is in the desired form.

Now assume that the lemma is true for $m-l=k$ and let us prove it for $m-l=k+1$. In this case,  $m=l+k+1$. In order to simplify the exposition, write
\[
{\bf b} := \left( \xi_{11}^{(i)}\xi_{13}^{(l+1)}-\xi_{13}^{(i)}\xi_{11}^{(l+1)} \right) \xi_{11}^{(l+2)}\cdots \widehat{\xi_{11}^{(i)}}\cdots \xi_{11}^{(l+k)}.
\]
It follows that
\begin{equation*}
    \begin{split}
\left[y_i^-,y_{l+1}^-,\dots,\widehat{y_i^-},\dots,y_{l+k+1}^- \right] &=\left[ \left[ y_i^-,y_{l+1}^-,\dots,\widehat{y_i^-},\dots,y_{l+k}^- \right],y_{l+k+1}^- \right]  \\
    &= \left[ (-1)^{m-l-1}2^{m-l-2} {\bf b} e_{13},y_{l+k+1}^- \right] \\
    &= - (-1)^{m-l-1}2^{m-l-2} {\bf b} \xi_{11}^{(l+k+1)}e_{13} - (-1)^{m-l-1}2^{m-l-2} {\bf b} \xi_{11}^{(l+k+1)}e_{13} \\
    &= (-1)^{m-l}2^{m-l-1} \textbf{b} \xi_{11}^{(l+k+1)}e_{13},
    \end{split}
\end{equation*}
and the proof is completed.
\end{proof}

\begin{Lemma}\label{productskew}
    The product $y_{l+1}^-\cdots y_{m}^-$ is evaluated to 
    \[
    \begin{pmatrix}
        \displaystyle\prod_{i=l+1}^m \xi_{11}^{(i)}&0&\displaystyle\sum_{i=1}^k (-1)^{k+i}\xi_{11}^{(l+1)}\cdots \xi_{13}^{(l+i)}\cdots \xi_{11}^{(l+k)}\\
        0&0&0\\
        0&0&(-1)^{k}\displaystyle\prod_{i=l+1}^m \xi_{11}^{(i)}
    \end{pmatrix},
    \]
    where $k\in\mathbb{N}$ is such that $m=l+k$.
\end{Lemma}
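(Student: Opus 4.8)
The plan is to argue by induction on $k=m-l$, the number of skew even variables in the product. The base case $k=1$ is immediate: the single factor $\overline{y_{l+1}^-}$ is precisely the claimed matrix, since the empty products of $\xi_{11}$'s equal $1$, the sign $(-1)^1$ produces the entry $-\xi_{11}^{(l+1)}$ in position $(3,3)$, and the off-diagonal sum reduces to its unique term $(-1)^{1+1}\xi_{13}^{(l+1)}=\xi_{13}^{(l+1)}$.

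For the inductive step, the key structural observation is that every matrix $\overline{y_j^-}$ is supported only on the positions $(1,1)$, $(1,3)$ and $(3,3)$, with the two diagonal entries being negatives of one another. Writing a generic factor as $a\,e_{11}+c\,e_{13}-a\,e_{33}$, a direct multiplication shows that the product of two matrices of this shape again has this shape; hence the evaluation of $y_{l+1}^-\cdots y_{l+k}^-$ is determined by tracking only three scalars, and I never leave this three-dimensional space.

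Concretely, I would assume that the evaluation $P_k$ of $y_{l+1}^-\cdots y_{l+k}^-$ equals $A\,e_{11}+C\,e_{13}+(-1)^kA\,e_{33}$, where $A=\prod_{i=l+1}^{l+k}\xi_{11}^{(i)}$ and $C$ is the sum in the statement. Multiplying $P_k$ on the right by the next factor $a\,e_{11}+c\,e_{13}-a\,e_{33}$ with $a=\xi_{11}^{(l+k+1)}$ and $c=\xi_{13}^{(l+k+1)}$, the new entries are: position $(1,1)$ gives $Aa$, correctly updating the product of $\xi_{11}$'s; position $(3,3)$ gives $(-1)^kA\cdot(-a)=(-1)^{k+1}Aa$, producing the required sign; and position $(1,3)$ gives $Ac-aC$.

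The only point requiring genuine care — the one I would single out as the main (though still routine) obstacle — is verifying that $Ac-aC$ coincides with the off-diagonal sum $C_{k+1}$ predicted by the formula for $k+1$ factors. I would split $C_{k+1}=\sum_{i=1}^{k+1}(-1)^{(k+1)+i}\xi_{11}^{(l+1)}\cdots\xi_{13}^{(l+i)}\cdots\xi_{11}^{(l+k+1)}$ into the term $i=k+1$ and the remaining terms. The term $i=k+1$ carries sign $(-1)^{2k+2}=1$ together with the single $\xi_{13}$ in the last slot, hence equals $Ac$. For each $i\le k$ the last slot still carries $\xi_{11}^{(l+k+1)}=a$, so that term equals $(-1)^{(k+1)+i}a\,\xi_{11}^{(l+1)}\cdots\xi_{13}^{(l+i)}\cdots\xi_{11}^{(l+k)}$; using $(-1)^{(k+1)+i}=-(-1)^{k+i}$ and summing over $i\le k$ yields exactly $-aC$. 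Thus $C_{k+1}=Ac-aC$, which closes the induction and establishes the lemma.
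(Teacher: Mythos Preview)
Your proof is correct and follows essentially the same approach as the paper: induction on $k=m-l$, with the inductive step carried out by right-multiplying the matrix from the hypothesis by $\overline{y_{l+k+1}^-}$ and checking the three relevant entries. Your verification of the $(1,3)$ entry via the split $C_{k+1}=Ac-aC$ is in fact slightly more explicit than the paper's, which performs the same multiplication but leaves the identification of the off-diagonal sum to the reader.
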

\begin{proof}
    Once again we proceed by induction on $k=m-l$. The base of the induction ($m=l+1$) is trivial. So let us assume the lemma is true for $m-l=k$ and prove it for $m-l=k+1$. In order to simplify the exposition, write
    \[
    {\bf S}:=  \displaystyle\sum_{i=1}^{k}(-1)^{k+i+1}\xi_{11}^{(l+1)}\cdots \xi_{13}^{(l+i)}\cdots \xi_{11}^{(l+k)}.
    \]
    
We obtain that
\begin{equation*}
    \begin{split}
     y_{l+1}^-\cdots y_m^- &= (y_{l+1}^-\cdots y_{l+k}^-)y_{l+k+1}^-\\
    &= \begin{pmatrix}
\displaystyle\prod_{i=l+1}^{l+k}\xi_{11}^{(i)}&0&\displaystyle\sum_{i=1}^k (-1)^{k+i} \xi_{11}^{(l+1)}\cdots \xi_{13}^{(l+i)}\cdots \xi_{11}^{(l+k)}\\
0&0&0\\
0&0&(-1)^k\displaystyle\prod_{i=l+1}^{l+k}\xi_{11}^{(i)}
    \end{pmatrix}y_{l+k+1}^-\\
    &= \begin{pmatrix}
\displaystyle\prod_{i=l+1}^{l+k}\xi_{11}^{(i)}\xi_{11}^{(l+k+1)}& 0 & \displaystyle \left( \prod_{i=l+1}^{l+k}\xi_{11}^{(i)} \right) \xi_{13}^{(l+k+1)} + {\bf S} \xi_{11}^{(l+k+1)}\\
0&0&0\\
0&0&(-1)^{k+1}\displaystyle\prod_{i=l+1}^{l+k}\xi_{11}^{(i)}\xi_{11}^{(l+k+1)}
    \end{pmatrix} \\
    &= \begin{pmatrix}
\displaystyle\prod_{i=l+1}^{l+k+1}\xi_{11}^{(i)}& 0 &\displaystyle \sum_{i=1}^{k+1}(-1)^{k+i+1}\xi_{11}^{(l+1)}\cdots \xi_{13}^{(l+i)}\cdots \xi_{11}^{(l+k+1)}\\
0&0&0\\
0&0&(-1)^{k+1}\displaystyle\prod_{i=l+1}^{l+k+1}\xi_{11}^{(i)}
\end{pmatrix}.
    \end{split}
\end{equation*}
\end{proof}

Finally, we recall the following result, which is a consequence of \cite[Lemma 3]{malev}.

\begin{Corollary}\label{dimupto2issubspace}
    Assume that the image of a multilinear polynomial $f$ on some algebra $A$ is contained in some $2$-dimensional vector space. Then, $f(A)$ is a vector subspace.
\end{Corollary}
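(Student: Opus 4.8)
The plan is to combine two structural features of images of multilinear maps with the cited result of Malev. First I would record the elementary observation that $f(A)$ is always closed under scalar multiplication. Since $\varphi_f$ is multilinear as a map on the product of the relevant homogeneous components $A_0^+, A_0^-, A_1^+, A_1^-$, replacing the first argument $a_1$ by $\lambda a_1$ scales the value $\varphi_f(a_1, \ldots, a_m)$ by $\lambda$; hence $\lambda v \in f(A)$ whenever $v \in f(A)$ and $\lambda \in F$, with $\lambda = 0$ giving $0 \in f(A)$. Thus $f(A)$ is a nonempty union of lines through the origin, and to conclude that it is a subspace it remains only to verify closure under addition.

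Next I would split into two cases according to the span of $f(A)$. If every pair of vectors in $f(A)$ is linearly dependent, then $f(A)$ is contained in a one-dimensional space $\spa\{v\}$; being a nonempty cone, it is either $\{0\}$ or all of $\spa\{v\}$, a subspace in either case. If instead $f(A)$ contains two linearly independent vectors, I would invoke \cite[Lemma 3]{malev}: the image of a multilinear map that contains two linearly independent vectors already contains a full two-dimensional subspace $W \subseteq f(A)$. Finally I would bring in the hypothesis. By assumption $f(A)$ lies inside a two-dimensional space $U$, so the chain $W \subseteq f(A) \subseteq U$ forces $2 = \dim W \leq \dim U = 2$, whence $W = U = f(A)$, and $f(A)$ is the two-dimensional subspace $W$. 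In all cases $f(A)$ is a vector subspace.

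The only substantive input is Malev's lemma, which performs the genuine work of passing from ``two independent vectors in the image'' to ``a two-dimensional subspace contained in the image''; everything else is the cone property of multilinear images together with a dimension count. Accordingly I do not expect any real obstacle beyond correctly applying that result, the argument being otherwise formal.
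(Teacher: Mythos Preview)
Your proposal is correct and matches the paper's approach: the paper states the corollary as a direct consequence of \cite[Lemma 3]{malev} (the very lemma you invoke), leaving the cone property and dimension count implicit, and you have simply spelled out those routine details.
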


Now we are in a position to prove the main result of this section.

\begin{Theorem}
Let $f\in P_{\Lambda_m}$. Then the image of $f$ on $UT_3(F)$ endowed with the elementary $\mathbb{Z}_2$-grading $(0,1,0)$ and the super-reflection superinvolution $\bar{\circ}$ is a vector space. Moreover, the image can be explicitly computed.
\end{Theorem}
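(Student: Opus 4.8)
The plan is to reduce to the case where $f$ is homogeneous in each of the four types of variables, since the image of a general $f\in P_{\Lambda_m}$ is controlled by its components and the four subspaces $A_0^+, A_0^-, A_1^+, A_1^-$ are defined by explicit spans. I would split the analysis according to how many odd variables (from $Z^+\cup Z^-$) occur in $\Lambda_m$. Because $A_1^+ = \spa_F\{e_{12}+e_{23}\}$ and $A_1^- = \spa_F\{e_{12}-e_{23}\}$ both lie in the radical and $(A_1)^2 \subseteq \spa_F\{e_{13}\}$ while $(A_1)^3 = \{0\}$, any monomial containing three or more odd variables vanishes. Thus only the cases of zero, one, or two odd variables need to be treated, and in each case the evaluation lands in a very small subspace.

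\emph{Even variables only.} Here I would use Lemmas~\ref{productsym}, \ref{commutatorsskew} and \ref{productskew}: modulo $\Id^*(UT_3(F))$ (Theorem~\ref{identities of ut3}), any multilinear $f$ on even variables reduces to a linear combination of the ordered product $y_1^+\cdots y_l^+ y_{l+1}^-\cdots y_m^-$ and terms of the form $y_1^+\cdots y_l^+[y_i^-,y_{l+1}^-,\ldots,\widehat{y_i^-},\ldots,y_m^-]$. Plugging in the generic evaluations $\overline{y_k^+}, \overline{y_k^-}$, the output is a matrix whose diagonal and $(1,3)$-entries are explicit polynomials in the $\xi$'s. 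The key observation is that the image lands in a vector space of dimension at most $2$ (spanned by a diagonal part and $e_{13}$), so by Corollary~\ref{dimupto2issubspace} it suffices to exhibit the image as the set of attainable such matrices; one then reads off the image explicitly by analyzing which diagonal/off-diagonal values can be simultaneously achieved as the $\xi$'s vary over $F$.

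\emph{One or two odd variables.} With one odd variable $z^\pm$, the product of even variables sits in the centralizer data computed above and $z^\pm$ contributes an $e_{12}\pm e_{23}$ factor; the evaluation again lands in a subspace of dimension $\le 2$, so Corollary~\ref{dimupto2issubspace} applies after I identify the attainable vectors. With exactly two odd variables, every monomial has its two odd letters multiplying to a scalar multiple of $e_{13}$ (since $(e_{12}\pm e_{23})(e_{12}\pm e_{23})\in\spa_F\{e_{13}\}$), so the entire image lies in $\spa_F\{e_{13}\}$, which is automatically a vector space. In each subcase I would use the $*$-identities from Theorem~\ref{identities of ut3} to put $f$ in a normal form before evaluating, so that only finitely many monomial types survive.

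The main obstacle will be the even-variables-only case: one must verify that the image is \emph{exactly} a vector space rather than merely contained in a two-dimensional space, i.e. rule out the possibility (as in the trivial-grading reflection case, where $y_1^-y_2^-$ fails) that the attainable set is a proper non-linear subset like a cone or a union of lines. The super-reflection structure here is what saves us: the symmetric part forces the two diagonal blocks to coincide in a way that makes the diagonal contribution proportional to a single parameter, collapsing the potential obstruction. I would make this precise by showing that the map sending the $\xi$-parameters to (diagonal coefficient, $e_{13}$-coefficient) is surjective onto its linear span, invoking Corollary~\ref{dimupto2issubspace} to conclude, and then recording the explicit description of $f(UT_3(F))$ case by case.
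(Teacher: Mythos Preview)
Your proposal is correct and follows essentially the same route as the paper: the same case split on the number of odd variables, the same normal form from Theorem~\ref{identities of ut3}, the same technical Lemmas~\ref{productsym}--\ref{productskew} for the even-only case, and the same observation that the image always lands in a subspace of dimension at most~$2$. The only cosmetic difference is that the paper verifies surjectivity onto $J_k$ (or $\mathcal{D}_k$) by an explicit choice of $\xi$'s in the even-only $k\ge 2$ case, whereas you invoke Corollary~\ref{dimupto2issubspace} uniformly; either way the explicit identification of the image requires the same coefficient analysis of the $\beta_{l+i}$.
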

\begin{proof}
Assume first that $f$ is a polynomial only in even variables, that is,
\[
f=f \left( y_1^+,\dots,y_l^+,y_{l+1}^-,\dots,y_m^- \right).
\]

By Theorem \ref{identities of ut3} and according to \cite{IoppoloMartino2018}, we can write $f$ as 
\[
f=\alpha y_1^+ \dots y_l^+y_{l+1}^- \dots y_m^-+\sum_{i=l+2}^m \alpha_i y_1^+\cdots y_l^+[y_i^-,y_{l+1}^-,\dots,\widehat{y_i^-},\dots,y_m^-].
\]

If $\alpha=0$, then by Lemma \ref{commutatorsskew} we see that $f(UT_3(F))\subset \mbox{span}_F \{e_{13}\}$. Since the image of a multilinear polynomial is closed under scalar product we must therefore have that $f(UT_3(F))=\{0\}$ or $f(UT_3(F))= \mbox{span}_F \{e_{13}\}$.

Now assume that $\alpha\neq0$ and let $k=m-l$. 

If $k=0$, then by Lemma \ref{productsym}, the image of $f$ on $UT_3(F)$ is exactly $A_0^+$. 

In case $k=1$, taking into account the $*$-identities from Theorem \ref{identities of ut3}, we must have $f=\alpha y_1^+\cdots y_l^+y_m^-$. By Lemma \ref{productsym} any matrix from $A_0^+$ can be realized as a product of even symmetric ones. Moreover, since 
\[
(\lambda_1(e_{11}+e_{33})+\lambda_2e_{22})(\lambda_3(e_{11}-e_{33})+\lambda_4 e_{13})=\lambda_1(\lambda_3(e_{11}-e_{33})+\lambda_4 e_{13}),
\]
we can see that the image of $f$ on $UT_3(F)$ must be exactly $A_0^-$ in this case.

From now on we may assume that $k\geq 2$. By Lemmas \ref{productsym} and \ref{productskew}, we can see that the main diagonal of $f$ is given by
\[
\alpha\prod_{i=1}^{m}\xi_{11}^{(i)}e_{11}+\alpha(-1)^{m-l}\prod_{i=1}^m\xi_{11}^{(i)}e_{33}.
\]
On the other hand the entry $(1,3)$ of $f$ can be calculated through Lemmas \ref{productsym}, \ref{commutatorsskew} and \ref{productskew}, and it is equal to
\[
\prod_{j=1}^l\xi_{11}^{(j)}\sum_{i=1}^{k}\beta_{l+i} \xi_{11}^{(l+1)}\cdots \xi_{13}^{(l+i)}\cdots \xi_{11}^{(l+k)}, 
\]
where $k=m-l$ and
\begin{itemize}
\item[$\bullet$] $\displaystyle \beta_{l+1}=(-1)^{k+1}\alpha +\sum_{i=2}^{k}\alpha_{l+i}2^{k-1}(-1)^k$,
\vspace{0.1 cm}
\item[$\bullet$] $\beta_{l+i}=(-1)^{k+i}\alpha -(-1)^{k}2^{k-1}\alpha_{l+i}$, if $i > 1$.
\end{itemize}

Suppose that $\beta_{l+i}=0$, for any $i=1,\dots,k$. Then $f(UT_3(F))$ is a subset of the subspace $\mathcal{D}_k$ of matrices of the form
\[
\begin{pmatrix}
    a&0&0\\
    0&0&0\\
    0&0&(-1)^ka
\end{pmatrix}.
\]
Since $\alpha\neq0$, then $f(UT_3(F))\neq\{0\}$ and therefore we conclude that
\[
f(UT_3(F))=\mathcal{D}_k.
\]

In case $\beta_{l+j}\neq0$ for some $j\in\{1,\dots,k\}$, then $f(UT_3(F))$ is contained in the subspace $J_k$ of matrices of the form
\[
\begin{pmatrix}
    a&0&b\\
    0&0&0\\
    0&0&(-1)^ka
\end{pmatrix}.
\]

Now let us set 
\begin{equation*}
    \begin{split}
\xi_{11}^{(t)}   &=1, \ t = 1, \ldots, l, \\ 
\xi_{13}^{(l+j)} &=\beta_{l+j}^{-1}b, \\ 
\xi_{11}^{(l+j)} &=\alpha^{-1}a, \\
\xi_{11}^{(l+i)} &=1, \mbox{ for }  i\neq j. \\
\xi_{13}^{(l+i)} &=0, \mbox{ for }  i\neq j.        
    \end{split}
\end{equation*}

We obtain an arbitrary matrix from $J_k$ in the image of $f$ on $UT_3(F)$. Hence 
\[
f(UT_3(F))=J_k.
\]

Finally, we have to consider the case in which $f$ may contain odd variables. 

Recall that $z_1z_2z_3=0$ is a $*$-identity of $UT_3(F)$, where the $z_i$'s denote any symmetric or skew-symmetric variable of homogeneous degree $1$. Hence we can assume that $f$ has at most two odd variables. 

Clearly, the image of $z_1z_2$ on $UT_3(F)$ is a subset of $ \mbox{span}_F \{e_{13}\}$ and so it is completely determined. 

Assume that $f$ has exactly one odd variable. In this case we must have that
\[
f(UT_3(F))\subset A_1.
\]
Since $\dim A_1=2$, by Corollary \ref{dimupto2issubspace}, it follows that $f(UT_3(F))$ is a vector space. 

If $f(UT_3)$ has dimension $0$ or $2$, then we know exactly what is the image. Moreover it is easy to see that $A_1$ can be realized as the image on $UT_3(F)$ of the polynomial
\[
f(y_1^+,z_1^+)=y_1^+z_1^+.
\]

On the other hand, every $1$-dimensional subspace of $A_1$ can be realized as the image on $UT_3(F)$ of some multilinear polynomial. Indeed, let 
\[
V=\mbox{span}_F \{\alpha e_{12}+\beta e_{23}\}
\]
be an $1$-dimensional subspace of $A_1$ ($\alpha$ and $\beta$ are fixed scalars). For 
\[
f(y_1^-,z_1^+)=\alpha y_1^-z_1^+-\beta z_1^+y_1^-
\]
we have that $f(UT_3(F))=V$.
\end{proof}

Motivated by the behavior observed in the case of graded algebras with graded involution (see \cite{fagundes}), in the final part of this paper we investigate the following question: does there exist a $\mathbb{Z}_2$-grading on $UT_n(F)$ (for $n \geq 4$) and a multilinear polynomial whose image is not a vector space?

\medskip

Assume first $n$ even. Consider the $\mathbb{Z}_2$-grading on $UT_n(F)$ given by the sequence 
\[
(0,1,0,1,\dots,0,1).
\]
Since the above sequence is of $*$-type, the super-reflection $\overline{\circ}$ is a superinvolution on this algebra.
Now let us consider matrix units $e_{ij}$'s, with $i\leq j$ and write $j=i+k$, for some $k \in \{0,\dots,n-i\}$. According to the action of the super-reflection on such matrices, we get that

\begin{equation*}
\begin{split}
A_0^+&= \mbox{span} \left \{ e_{i,i+k}+e_{n+1-i-k,n+1-i} \ | \  k \equiv 0 \ (\mbox{mod} \; 4) , i=1,\dots,(n-k)/2 \right \} \\ 
&  \cup \, \mbox{span} \left \{e_{i,i+k} -e_{n+1-i-k,n+1-i} \ | \ k \equiv 2 \ (\mbox{mod} \; 4), i=1,\dots,(n-k)/2 \right \}, \\
A_1^+&= \mbox{span} \left \{ e_{i,i+k}+e_{n+1-i-k,n+1-i} \ | \  k \equiv 1 \ (\mbox{mod} \; 4) , i=1,\dots,(n-k+1)/2 \right \} \\ 
&  \cup \, \mbox{span} \left \{ e_{i,i+k}-e_{n+1-i-k,n+1-i} \ | \ k \equiv 3 \ (\mbox{mod} \; 4), i=1,\dots,(n-k-1)/2 \right \}.
\end{split}    
\end{equation*}

Now let us consider the following polynomial
\[
f(y^+,z^+)=y^+z^+.
\] 
According to the description of the symmetric even and odd parts of $A$ given above, we notice that $e_{11}+e_{nn}\in A_0^+$ and $e_{12}+e_{n-1,n}\in A_{1}^+$. Hence 
$$
e_{12}=(e_{11}+e_{nn})(e_{12}+e_{n-1,n})\in f \left(UT_n(F) \right).
$$ 
Analogously, since $e_{22}+e_{n-1,n-1}\in A_{0}^+$ and $e_{23}+e_{n-2,n-1}\in A_1^+$, their product produces $e_{23}\in f(UT_n(F))$. 

We now proceed as in \cite[Proposition 4.8]{fagundes}. If $f(UT_n(F))$ is a vector subspace, then $e_{12}+e_{23}\in f(UT_n(F))$. 

Let us write $e_{12}+e_{23}=AB$, for some $A\in A_0^+$ and $B\in A_1^+$. Write 
$$
A=\sum_{i,j=1}^n a_{ij}e_{ij} \ \ \ \mbox{ and } \ \ \ B=\sum_{i,j=1}^n b_{ij}e_{ij}.
$$
Notice that the entries $(1,2)$ and $(2,3)$ of $AB$ are equal to 
$$
\sum_{l=1}^n a_{1l}b_{l2}=a_{11}b_{12} \ \ \ \mbox{and} \ \ \ \sum_{l=1}^n a_{2l}b_{l3}=a_{22}b_{23}.
$$ 
Finally, the entry $(n-1,n)$ of $AB=e_{12}+e_{23}$ is $0$ and on the other hand, equals
$$
\sum_{l=1}^n a_{n-1,l}b_{ln}=a_{n-1,n-1}b_{n-1,n}.
$$ 
In conclusion, we get $a_{n-1,n-1}=a_{22}=0$ or $b_{n-1,n}=b_{12}=0$, which contradicts the assumption that $e_{12}+e_{23}\in f(UT_n(F))$.

\medskip

Here we highlight that the case $n$ odd can be treated analogously, by considering the $\mathbb{Z}_2$-grading given by the sequence $(0,1,0,1,\dots,0,1,0)$. So we have proved the following general result.

\begin{Theorem}
Let $UT_n(F)$, $n \ge 4$, with elementary $\mathbb{Z}_2$-grading given by $(0,1,0,1, \ldots)$ and endowed with the super-reflection superinvolution $\bar{\circ}$. Then the image of the polynomial $f(y^+,z^+)=y^+z^+$ is not a vector space.
\end{Theorem}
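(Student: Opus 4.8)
The plan is to exploit the single property that distinguishes a vector space from an arbitrary set closed under scalars: closure under addition. I will locate two elementary images of $f$, namely the matrix units $e_{12}$ and $e_{23}$, and then show that their sum $e_{12}+e_{23}$ cannot lie in $f(UT_n(F))$. Since $f(y^+,z^+)=y^+z^+$ is multilinear, its image is automatically closed under scalar multiplication (if $v=y^+z^+$ then $\lambda v=(\lambda y^+)z^+$ with $\lambda y^+\in A_0^+$), so exhibiting one pair of images whose sum escapes the image suffices to defeat the vector space property.

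First I would record the explicit shapes of $A_0^+$ and $A_1^+$ for the grading $(0,1,0,1,\dots)$, which are governed by the action of $\bar{\circ}=\circ\phi$ on the units $e_{i,i+k}$ according to the residue of $k\pmod 4$. Using this, I would point out that $e_{11}+e_{nn}\in A_0^+$ (diagonal, $k=0$) and $e_{12}+e_{n-1,n}\in A_1^+$ ($k=1$), and check that their product collapses to $e_{12}$, since every cross term vanishes for mismatched row/column indices; thus $e_{12}\in f(UT_n(F))$. Likewise the product $(e_{22}+e_{n-1,n-1})(e_{23}+e_{n-2,n-1})$ yields a nonzero scalar multiple of $e_{23}$, so $e_{23}$ lies in the image as well.

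Assuming now that the image is a vector space, I would write $e_{12}+e_{23}=AB$ with $A=\sum a_{ij}e_{ij}\in A_0^+$ and $B=\sum b_{ij}e_{ij}\in A_1^+$, and read off three entries of the product $AB$. Because every diagonal unit $e_{ii}$ is even, the matrix $B$ has vanishing diagonal, so the $(1,2)$ and $(2,3)$ entries of $AB$ reduce to $a_{11}b_{12}=1$ and $a_{22}b_{23}=1$, forcing $a_{22}\neq 0$ and $b_{12}\neq 0$. On the other hand, since $(n-1,n)\notin\{(1,2),(2,3)\}$ for $n\ge 4$, the $(n-1,n)$ entry of $e_{12}+e_{23}$ is $0$, so $a_{n-1,n-1}b_{n-1,n}=0$. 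The contradiction will come from the \emph{super-reflection symmetry}: as $A$ is $\bar{\circ}$-symmetric, its diagonal satisfies $a_{22}=a_{n-1,n-1}$, and as $B$ is $\bar{\circ}$-symmetric with $e_{12}$ and $e_{n-1,n}$ belonging to the same $k\equiv 1\pmod 4$ symmetric pair, one gets $b_{12}=b_{n-1,n}$. Hence $a_{n-1,n-1}b_{n-1,n}=a_{22}b_{12}\neq 0$, contradicting the vanishing of the $(n-1,n)$ entry.

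The step I expect to be the main obstacle is pinning down these two symmetry relations with the correct signs: one must verify that for the grading $(0,1,0,1,\dots)$ both positions in play, the diagonal ($k=0$) and the $(1,2)$ slot ($k=1$), correspond to the \emph{plus} sign of $\bar{\circ}$, so that $\bar{\circ}$-symmetry gives the honest equalities $a_{22}=a_{n-1,n-1}$ and $b_{12}=b_{n-1,n}$ rather than sign-twisted versions; this is exactly where the $\phi$-signs in the definition of the super-reflection must be tracked. Once this is settled for even $n$, the odd case follows identically using the $*$-type grading $(0,1,0,1,\dots,0,1,0)$, whose symmetric subspaces exhibit the same corner structure, so the same three-entry comparison produces the contradiction.
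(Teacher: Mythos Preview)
Your proposal is correct and follows essentially the same route as the paper: produce $e_{12}$ and $e_{23}$ in the image via the products $(e_{11}+e_{nn})(e_{12}+e_{n-1,n})$ and $(e_{22}+e_{n-1,n-1})(e_{23}+e_{n-2,n-1})$, then read off the $(1,2)$, $(2,3)$ and $(n-1,n)$ entries of a hypothetical factorization $e_{12}+e_{23}=AB$ and invoke the $\bar{\circ}$-symmetry relations $a_{22}=a_{n-1,n-1}$ and $b_{12}=b_{n-1,n}$ to reach a contradiction. Your write-up is in fact slightly more explicit than the paper's in spelling out why these two symmetry equalities carry the plus sign (the $k\equiv 0$ and $k\equiv 1\pmod 4$ cases of the $\phi$-action), which is exactly the point that needs care.
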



\begin{thebibliography}{99.}

\bibitem{albert}
A.A. Albert, B. {\em Muckenhoupt, On matrices of trace zero}, Mich. Math. J. {\bf 4} (1957) 1--3.

\bibitem{AGK} 
E. Aljadeff, A. Giambruno and Y. Karasik, 
\emph{Polynomial identities with involution, superinvolutions and the Grassmann envelope}. 
Proc. Amer. Math. Soc. {\bf 145} (2017) No. 5, 1843--1857.

\bibitem{Matej}
M. Bre\v{s}ar, {\em Commutators and images of noncommutative polynomials}, Adv. Math.
{\bf 374} (2020), 107346, 21 pp.

\bibitem{CentroneEstradaIoppolo2022} 
L. Centrone, A. Estrada and A. Ioppolo, 
\emph{On PI-algebras with additional structures: Rationality of Hilbert series and Specht's problem}.
J. Algebra {\bf 592} (2022) 300--356.

\bibitem{CostaIoppoloSantosVieira2021} 
W. D. S. Costa, A. Ioppolo, R. B. dos Santos and A. C. Vieira, 
\emph{Unitary superalgebras with graded involution or superinvolution of polynomial growth}. 
J. Pure Appl. Algebra {\bf 225} (2021) 106666.

\bibitem{DiVincenzoKoshlukovValenti2004} 
O. M. Di Vincenzo, P. Koshlukov, A. Valenti, 
{\em Gradings on the algebra of upper-triangular matrices and their graded identities}, 
J. Algebra {\bf 275} (2004), 550--556.

\bibitem{DiVincenzoPlamenAngela}
 O. M. Di Vincenzo, P. Koshlukov, A. Valenti, {\em Gradings on the algebra of upper triangular
matrices and their graded identities}. J. Algebra {\bf 275}(2004), no. 2, 550--566.

\bibitem{DiVincenzoKoshlukovLaScala2006}
O. M. Di Vincenzo, P. Koshlukov, R. La Scala, 
{\em Involutions for upper triangular matrix algebras}, 
Adv. Applied. Math. {\bf 37} (2006), 541--568.

\bibitem{fagundesplamen}
P. Fagundes, P. Koshlukov, {\em Images of multilinear graded polynomials on upper triangular matrix algebras}, Can. J. Math. {\bf 75} (5) (2023) 1540--1565.

\bibitem{fagundes} 
P. Fagundes, 
{\em Values of multilinear graded $*$-polynomials upper triangular matrices of small dimension}, 
J. Algebra {\bf 644} (2024), 730--748.

\bibitem{gargate}
I. Gargate, T. de Mello, {\em Images of multilinear polynomials on $n\times n$ upper triangular matrices over infinite fields}, Isr. J. Math. {\bf 252} (1) (2022) 337--354.

\bibitem{GiambrunoIoppoloLaMattina2016} 
A. Giambruno, A. Ioppolo, D. La Mattina,
{\em Varieties of algebras with superinvolution of almost polynomial growth}, 
Algebr. Represent. Theory {\bf 19} (2016), no. 3, 599--611.

\bibitem{Ioppolo2018} 
A. Ioppolo,
{\em The exponent for superalgebras with superinvolution}, 
Linear Algebra Appl. {\bf 555} (2018), 1--20.

\bibitem{Ioppolo2022} 
A. Ioppolo,
{\em Graded linear maps on superalgebras}, 
J. Algebra {\bf 605} (2022), 377--393.

\bibitem{IoppoloMartino2018} 
A. Ioppolo, F. Martino, 
{\em Superinvolutions on upper-triangular matrix algebras}, 
J. Pure Appl. Algebra {\bf 222} (2018), no. 8, 2022--2039.

\bibitem{IoppoloMartino2021} 
A. Ioppolo and F. Martino, 
\emph{On multiplicities of cocharacters for algebras with superinvolution}. 
J. Pure Appl. Algebra. {\bf 225} (2021) 106536.

\bibitem{Kac1977} 
V. G. Kac, 
\emph{Lie superalgebras}.
Adv. Math. {\bf 26} (1977) No. 1, 8--96.

\bibitem{kanel-belov}
A. Kanel-Belov, S. Malev, L. Rowen, {\em The images of non-commutative polynomials evaluated on $2\times 2$ matrices}, Proc. Am. Math. Soc. {\bf 140} (2012) 465--478.

\bibitem{Malcev1971} 
Yu. N. Malcev, 
{\em A basis for the identities of the algebra of upper triangular matrices}, (Russian), Algebra i Logika {\bf 10} (1971), 393--400; 
English translation: Algebra and Logic {\bf 10} (1971).

\bibitem{malev} 
S. Malev, 
{\em The images of non-commutative polynomials evaluated on $2\times 2$ matrices over an arbitrary field}, 
J. Algebra Appl. {\bf 13} (2014), no. 6, 145004.

\bibitem{RacineZelmanov2003} 
M. L. Racine and E. I. Zelmanov,
\emph{Simple Jordan superalgebras with semisimple even part}. 
J. Algebra {\bf 270} (2003) No. 2, 374--444.

\bibitem{R} 
A. Regev,  
{\em Existence of identities in $A \otimes B$}, 
Israel J. Math. {\bf 11} (1972), 131--152.

\bibitem{shoda} K. Shoda, {\em Einige S\"atze \"uber Matrizen}, Jpn. J. Math. {\bf 3} (1937) 361--365 (in German).

\bibitem{ValentiZaicev2003}
A. Valenti, M. Zaicev, 
{\em Abelian gradings on upper-triangular matrices}, 
Arch. Math. {\bf 80} (2003), no. 1, 12--17.

\bibitem{ValentiZaicev2007}
A. Valenti, M. Zaicev, 
{\em Group gradings on upper triangular matrices}, 
Arch. Math. {\bf 89} (2007), no. 1, 33--40.


\end{thebibliography}
\end{document}